\pdfoutput=1
\documentclass[10pt,leqno]{amsart}

\usepackage{graphicx}
\usepackage{indentfirst,csquotes}

\usepackage{amssymb,amsthm,amsmath}
\usepackage{eucal}
\usepackage{xcolor,paralist,hyperref,titlesec,fancyhdr,etoolbox}

\usepackage{tikz}
\usetikzlibrary{arrows.meta,positioning}

\newtheorem{theorem}{Theorem}[section]

\newtheorem{proposition}[theorem]{Proposition}
\newtheorem{corollary}[theorem]{Corollary}

\theoremstyle{definition}
\newtheorem{definition}[theorem]{Definition}

\newtheorem{remark}[theorem]{Remark}

\makeatletter
\providecommand{\@secnumpunct}{.}   
\makeatother

\titleformat{\section}{\normalfont\large\bfseries\centering}{\thesection.}{0.5em}{}
\titlespacing*{\section}{0pt}{2.6ex plus 1ex minus .2ex}{1.6ex plus .2ex}

\hypersetup{ colorlinks=true, linkcolor=black, filecolor=black, urlcolor=black,
             citecolor=black }

\newcommand{\seq}{\Rightarrow}
\newcommand{\arr}{\rightarrow}
\newcommand{\Gt}{\mathsf{G3T}}
\newcommand{\Gz}{\mathsf{G0T}}
\newcommand{\Ng}{\mathsf{NgT}}
\newcommand{\LK}{\mathrm{LK}}

\newcommand{\wkL}{\mathrm{wkL}}
\newcommand{\wkR}{\mathrm{wkR}}
\newcommand{\ctrL}{\mathrm{ctrL}}
\newcommand{\oa}{\mathrm{oa}}
\newcommand{\ero}{\mathrm{end}}

\begin{document}

\title[Proof theory for sequent-style tableaux]{Proof theory for sequent-style tableaux: G0- and G3-style sequent calculi and full normalization}

\author[S.\ Cuconato]{Simone Cuconato}

\address{Department of Physics,
\newline \indent University of Calabria,
\newline \indent 87036 Rende (CS), Italy}
\email{simone.cuconato@unical.it
\newline \indent ORCID iD: \url{https://orcid.org/0000-0003-0277-9575}}

\subjclass[2020]{Primary 03F05; Secondary 03B05, 03F03}

\keywords{Sequent-style tableaux, G0-style sequent calculus, G3-style sequent
calculus, natural deduction with general elimination rules, cut elimination,
full normalization theorem}

\date{\today}

\begin{abstract}
Sequent-style tableaux are a one-sided refutation calculus for classical propositional logic, in which each node of the refutation tree carries a finite block of formulae and the structural rules are absorbed into the data structure and the closure criterion. Building on the correspondence between this block calculus and the cut-free sequent calculus, and following the programme of Kamide and Negri, we recast the calculus as a structural-rule-free G3-style sequent calculus $\Gt$ with shared contexts, and we introduce a G0-style sequent calculus $\Gz$ with independent contexts, explicit weakening and contraction, generalized initial sequents, and a primitive explosion rule. A theorem establishing the equivalence between $\Gz$ and $\Gt$ is proved, and the cut-elimination theorem for $\Gz$ is obtained as a consequence. We then introduce a natural deduction system $\Ng$ with general elimination rules matching the left rules of $\Gz$, and we prove a full normalization theorem for $\Ng$. The proof is achieved by means of bidirectional translations between $\Gz$ and $\Ng$: normal derivations correspond to cut-free derivations, and full normal form to the discipline in which every major premiss of an elimination rule is an assumption. We also determine the reach of the formula-succedent fragment, which is shown to have no theorems, so that the equivalence of the three systems is one of consequence and not of theoremhood, and we show that classical logic is recovered on the succedent side, and recovered exactly, by adjoining the rule of indirect proof.
\end{abstract}

\maketitle

\section{Introduction}\label{sec:intro}

The proof theory of the family of Belnap--Dunn and intuitionistic logics has, in recent years, been organized around a precise correspondence between three kinds of deductive apparatus: G3-style sequent calculi, which are structural-rule-free and analytic; G0-style sequent calculi, which reintroduce the structural rules explicitly and split the contexts of the two-premiss rules; and natural deduction systems with general elimination rules, in which the eliminations discharge assumptions in the manner of disjunction elimination. Kamide and Negri \cite{KamideNegri2025} have shown, for Gurevich logic, Nelson logic and intuitionistic logic, that these three presentations are not merely equivalent in the coarse sense of proving the same theorems, but are tied together by translations that carry cut-free derivations to normal derivations and back, so that the cut-elimination theorem on the sequent side and the normalization theorem on the natural deduction side become two readings of one and the same structural fact. The technique of bidirectional translation between G0-style sequent calculi and natural deduction with general elimination rules, which goes back to von Plato and to Negri and von Plato \cite{vonPlato2001,NegriVonPlato2001,NegriVonPlatoJSL2001}, provides the fine-grained refinement of the relation between normalization and cut elimination on which their study rests.

The present paper carries out this programme for a calculus of a different provenance: the calculus of \emph{sequent-style tableaux}, introduced by the author \cite{Cuconato2025IM,cuconato2025metodi} as a one-sided refutation calculus for classical logic that grafts the sequent notation of Gentzen onto the branching architecture of analytic tableaux. In that calculus each node of the refutation tree carries not a single formula, as in the classical presentation of Smullyan \cite{Smullyan1968}, but a finite \emph{block} $\Pi$ of formulae, the counterpart of the antecedent of a sequent whose consequent has been absorbed, in negated form, into a single one-sided list. The passage from the sequent $\Gamma\seq\Delta$ to the block $\Pi=\Gamma\cup\lnot[\Delta]$ turns a two-sided calculus of synthesis into a one-sided calculus of analysis; the branch does not accumulate formulae as it descends, but transports the whole block, decomposing at each step the principal formula while inheriting the remaining context unchanged. The calculus is cut-free by construction, it enjoys the subformula property, and it stands in a bidirectional structural correspondence with the cut-free sequent calculus $\LK$ of Gentzen, a correspondence that the author has established in full \cite{Cuconato2025IM}. It is this last result that makes the sequent-style tableaux a natural object for the Kamide--Negri treatment: a calculus already known to coincide, on the plane of derivability, with cut-free $\LK$ invites the question whether the whole architecture of G0- and G3-style calculi and of general-elimination natural deduction can be erected upon it.

We answer this question in the affirmative for the propositional fragment. Our point of departure is the observation that the block calculus, read as a sequent calculus, is already in G3 style: its two-premiss rules share their context, its initial closed blocks are complementary pairs, and the structural rules of weakening and contraction are not primitive but absorbed, the former into the closure criterion and the latter into the set-theoretic reading of blocks. We make this reading explicit by defining a G3-style sequent calculus $\Gt$ whose sequents carry, besides the block, a possibly empty succedent slot, so that the purely refutational sequents $\Pi\seq$ recover the block calculus while the sequents $\Gamma\seq C$ with $C$ a formula give room for the introduction rules that the natural deduction correspondence requires. Alongside $\Gt$ we define a G0-style calculus $\Gz$ in which the contexts of the two-premiss rules are independent, weakening and contraction are explicit rules, the initial sequents are generalized to arbitrary formulae, and the explosion rule that governs the interaction of a formula with its negation is made primitive, in the sequent form that Kamide and Negri give it. We then prove a theorem establishing the equivalence of $\Gz$ and $\Gt$, and we obtain, as an immediate consequence, the cut-elimination theorem for $\Gz$.

The second half of the paper is devoted to natural deduction. We introduce a system $\Ng$ with general elimination rules for the connectives and for their negated forms, together with the matching introduction rules and the explosion rule, and we define, following von Plato, the notion of \emph{full normal form}: a derivation is fully normal when every major premiss of an elimination rule is an assumption. We then present two translations, one from fully normal derivations of $\Ng$ to derivations of $\Gz$, and one from derivations of $\Gz$ to fully normal derivations of $\Ng$, and we use them to prove the full normalization theorem for $\Ng$. The bidirectional character of the translation is essential: it is by passing to the sequent calculus, eliminating the cut, and translating back that an arbitrary derivation is brought to full normal form, so that the normalization theorem for $\Ng$ rests, in the end, on the cut-elimination theorem for $\Gz$.

Two choices of presentation deserve a word of comment. First, the negation of the block calculus is classical, and we treat it throughout as an \emph{involutive De Morgan negation}: it is governed by the double-negation rule $\lnot\lnot A\rightsquigarrow A$ and by the De Morgan rules for the connectives, exactly the apparatus that Kamide and Negri employ for the strong negation of the Belnap--Dunn family. This is not a restriction but a convenience: it keeps every negation rule in the analytic, subformula-respecting form that the translations require. It should be said at once where the classical strength of the system resides. It resides in the \emph{refutational} reading: the blocks that close are exactly the classically unsatisfiable finite sets of formulae, and on that fragment the involution, the explosion rule and the closure on complementary pairs deliver classical logic in full. The formula-succedent fragment is another matter, and we determine it exactly in Proposition~\ref{prop:noempty}: no sequent with empty antecedent is derivable, so that $\Ng$ has no theorems and the correspondence between the three systems is a correspondence of consequence rather than of theoremhood. Subsection~\ref{subsec:raa} shows that classical logic is restored on the succedent side, and restored exactly, by adjoining the rule of indirect proof. Second, we confine the development to the propositional level. The first-order block calculus adds the universal and existential rules, with their parameter and freshness conditions, and the whole apparatus of G0- and G3-style calculi and of general-elimination natural deduction extends to them along the lines of Kamide and Negri; we indicate the extension in a concluding remark (Remark~\ref{rem:firstorder}) but do not carry it out here, so as to keep the structural analysis unencumbered.

The paper is organized as follows. Section~\ref{sec:prelim} fixes the language and recalls the block calculus and its correspondence with cut-free $\LK$. Section~\ref{sec:sequents} introduces the G3- and G0-style sequent calculi $\Gt$ and $\Gz$, establishes their structural properties, proves the equivalence theorem, and derives the cut-elimination theorem for $\Gz$. Section~\ref{sec:nd} introduces the natural deduction system $\Ng$ with general elimination rules, defines full normal form, presents the two translations, and proves the full normalization theorem and the equivalence between the natural deduction and the sequent calculi. Section~\ref{sec:remarks} collects some concluding remarks, including the extension to first-order logic.

\section{The block calculus and its sequent counterpart}\label{sec:prelim}

\subsection{Language}

The formulae of the propositional language are built from a countable set of propositional variables $p,q,r,\dots$ by means of the connectives $\lnot$ (negation), $\land$ (conjunction), $\lor$ (disjunction) and $\arr$ (implication). We use $A,B,C,\dots$ for arbitrary formulae and, as is customary in the tableau tradition, we read the implication classically, so that $A\arr B$ behaves as $\lnot A\lor B$; this is reflected in the rules below, in which the implication is decomposed exactly as the corresponding disjunction. A \emph{literal} is a propositional variable or the negation of a propositional variable. The negation is taken as \emph{involutive}: the double negation $\lnot\lnot A$ is governed by a rule that identifies it, deductively, with $A$, and the negations of the compound formulae are governed by the De Morgan rules. This is the treatment that the strong negation of the Belnap--Dunn family receives in \cite{KamideNegri2025}, and we adopt it here because it keeps every negation rule analytic while yielding, in the presence of the closure on complementary pairs, the full classical logic.

The letter $\Gamma$, possibly decorated, denotes a finite multiset of formulae; the letter $\Pi$ denotes a finite \emph{set} of formulae, which we call a \emph{block}. We write $\lnot[\Delta]$ for the set $\{\lnot A\mid A\in\Delta\}$.

\subsection{The block calculus}\label{subsec:block}

We recall the block calculus of \cite{Cuconato2025IM} in its propositional form. A \emph{block} is a finite set of formulae; it is the datum carried by each node of a refutation tree. A block is \emph{closed} if it contains a complementary pair, that is, a formula $\theta$ together with its negation $\lnot\theta$; a closed block is marked with $\times$ and terminates its branch. The rules of the calculus decompose a distinguished \emph{principal} formula of a block into the block that lies below it, inheriting the rest of the block --- written $\Pi$ --- unchanged. Following Smullyan's classification, the one-premiss rules are of type $\alpha$ (conjunctive) and the two-premiss rules of type $\beta$ (disjunctive).

\begin{definition}[Block rules]\label{def:blockrules}
The rules of type $\alpha$ are
\[
\frac{\Pi,\lnot\lnot A}{\Pi,A}\ \lnot\lnot
\qquad
\frac{\Pi,A\land B}{\Pi,A,B}\ \land
\qquad
\frac{\Pi,\lnot(A\lor B)}{\Pi,\lnot A,\lnot B}\ \lnot\lor
\qquad
\frac{\Pi,\lnot(A\arr B)}{\Pi,A,\lnot B}\ \lnot\arr
\]
and the rules of type $\beta$ are
\[
\frac{\Pi,\lnot(A\land B)}{\Pi,\lnot A\ \mid\ \Pi,\lnot B}\ \lnot\land
\qquad
\frac{\Pi,A\lor B}{\Pi,A\ \mid\ \Pi,B}\ \lor
\qquad
\frac{\Pi,A\arr B}{\Pi,\lnot A\ \mid\ \Pi,B}\ \arr
\]
In each rule the principal formula, displayed to the right of $\Pi$, is removed and replaced by the formulae below the line; an $\alpha$ rule produces a single child block, a $\beta$ rule produces two child blocks. A \emph{tableau} for a block $\Pi$ is a finite tree, rooted at $\Pi$, grown by these rules; it is \emph{closed} if every one of its leaves is a closed block.
\end{definition}

The calculus is sound and complete for classical propositional logic in the following sense: a block $\Pi$ admits a closed tableau if and only if $\Pi$, read as the conjunction of its members, is unsatisfiable \cite{Cuconato2025IM}. Since the block associated with a sequent $\Gamma\seq\Delta$ is $\Pi=\Gamma\cup\lnot[\Delta]$, and $\Gamma\seq\Delta$ is valid precisely when $\Gamma\cup\lnot[\Delta]$ is unsatisfiable, the closure of the tableau for $\Pi$ is equivalent to the validity of the sequent. The structural rules are not primitive: contraction is absorbed by the set-theoretic nature of blocks, which record no multiplicities; weakening is absorbed by the closure criterion, since a block closes as soon as it contains a complementary pair, independently of the remaining formulae; and no cut rule is present, no rule introducing, on passing from a block to its children, a formula that is not a component of the principal one. The calculus is, in this precise sense, analytic.

\subsection{The correspondence with cut-free $\LK$}\label{subsec:corr}

The block calculus is not merely a notational variant of the tableau method: it stands in a bidirectional structural correspondence with the cut-free sequent calculus. We read a block $\Pi$ as the one-sided sequent $\Pi\seq$ with empty succedent, and a closed block $\{\theta,\lnot\theta\}$ as the sequent $\theta,\lnot\theta\seq$, derivable in $\LK$ from the axiom $\theta\seq\theta$. Under this reading the following holds \cite[Theorem~7.5]{Cuconato2025IM}: for $\Pi=\Gamma\cup\lnot[\Delta]$, there exists a closed tableau for $\Pi$ if and only if $\Gamma\seq\Delta$ is derivable in cut-free $\LK$. The translation carries a closed tableau to a cut-free derivation by inverting the orientation of the tree --- the root becomes the endsequent, the closed leaves become axioms, each $\beta$ bifurcation becomes a two-premiss rule, each $\alpha$ step a one-premiss rule --- and, in the converse direction, it carries a cut-free derivation to a closed tableau by the same inversion. No step of either translation employs the cut rule.

It is this correspondence that licenses the reformulation we undertake in the next section. A calculus that coincides, derivation by derivation, with cut-free $\LK$ is a calculus already in normal form, and the machinery of structural proof theory --- the distinction between G0- and G3-style presentations, the reduction of cut elimination to the interplay of the two, and the translation into natural deduction --- can be brought to bear on it directly. We turn to this now, taking care to present the sequent calculi in a form in which the block calculus is literally the fragment with empty succedent.

\section{G3- and G0-style sequent calculi and cut elimination}\label{sec:sequents}

We now transcribe the block calculus into sequent form. The design is governed by two requirements. On the one hand, the purely refutational sequents, those with empty succedent, must reproduce the block calculus exactly, so that the classical content established in \cite{Cuconato2025IM} is preserved without alteration. On the other hand, the sequents must carry a succedent slot, so that the introduction rules --- absent from a pure refutation calculus but indispensable for the correspondence with natural deduction --- find their place. We meet both requirements by taking sequents of the form $\Gamma\seq C$ in which $C$ is a formula or is empty; the block calculus is then the fragment in which $C$ is always empty.

\subsection{The calculus $\Gt$}\label{subsec:G3T}

\begin{definition}[Sequents]\label{def:sequent}
A \emph{sequent} is an expression $\Gamma\seq C$ where $\Gamma$ is a finite multiset of formulae and $C$ is either a formula or empty. We call $\Gamma$ the \emph{antecedent} and $C$ the \emph{succedent}. We write $L\vdash\Gamma\seq C$ to record that $\Gamma\seq C$ is derivable in the calculus $L$, and $L\vdash_n\Gamma\seq C$ to record that it is derivable with a derivation of height at most $n$. Two calculi $L_1,L_2$ are \emph{theorem-equivalent} if $L_1\vdash\Gamma\seq C$ holds exactly when $L_2\vdash\Gamma\seq C$ holds. A rule is \emph{admissible} in $L$ if, whenever its premisses are derivable in $L$, so is its conclusion; it is \emph{height-preserving admissible}, in symbols hp-admissible, if moreover the height of the derivation of the conclusion does not exceed the greatest of the heights of the derivations of the premisses; it is \emph{invertible} if the derivability of its conclusion entails that of each of its premisses.
\end{definition}

\begin{definition}[The calculus $\Gt$]\label{def:G3T}
Throughout the rules, $C$ denotes a formula or the empty succedent, and $p$ denotes a propositional variable. The calculus $\Gt$ consists of the following initial sequents and rules.

\smallskip
\noindent\emph{Initial sequents.}
\[
p,\Gamma\seq p\ \ \mathrm{(id)}
\qquad
\lnot p,\Gamma\seq\lnot p\ \ (\mathrm{id}^{\lnot})
\qquad
p,\lnot p,\Gamma\seq\ \ \mathrm{(cl)}
\]

\vspace{0.22cm}

\noindent\emph{Succedent weakening.}
\[
\frac{\Gamma\seq}{\Gamma\seq C}\ \wkR
\]

\[
\begin{array}{cc}
\displaystyle
\frac{A,B,\Gamma\seq C}{A\land B,\Gamma\seq C}\ \land\mathrm L
&
\displaystyle
\frac{A,\Gamma\seq C}{\lnot\lnot A,\Gamma\seq C}\ \lnot\lnot\mathrm L
\\[2ex]
\displaystyle
\frac{\lnot A,\lnot B,\Gamma\seq C}{\lnot(A\lor B),\Gamma\seq C}\ \lnot\lor\mathrm L
&
\displaystyle
\frac{A,\lnot B,\Gamma\seq C}{\lnot(A\arr B),\Gamma\seq C}\ \lnot\arr\mathrm L
\end{array}
\]

\vspace{0.22cm}

\noindent\emph{Left rules of type $\beta$ (two premisses, shared context).}
\[
\begin{array}{cc}
\displaystyle
\frac{\lnot A,\Gamma\seq C\quad \lnot B,\Gamma\seq C}
     {\lnot(A\land B),\Gamma\seq C}\ \lnot\land\mathrm L
&
\displaystyle
\frac{A,\Gamma\seq C\quad B,\Gamma\seq C}
     {A\lor B,\Gamma\seq C}\ \lor\mathrm L
\\[2ex]
\displaystyle
\frac{\lnot A,\Gamma\seq C\quad B,\Gamma\seq C}
     {A\arr B,\Gamma\seq C}\ \arr\mathrm L
&
\end{array}
\]

\vspace{0.22cm}

\noindent\emph{Right rules (introductions).}

\[
\begin{array}{cc}
\displaystyle
\frac{\Gamma\seq A\quad\Gamma\seq B}
     {\Gamma\seq A\land B}\ \land\mathrm R
&
\displaystyle
\frac{\Gamma\seq A}
     {\Gamma\seq A\lor B}\ \lor\mathrm R_1
\\[2ex]
\displaystyle
\frac{\Gamma\seq B}
     {\Gamma\seq A\lor B}\ \lor\mathrm R_2
&
\displaystyle
\frac{\Gamma\seq A}
     {\Gamma\seq\lnot\lnot A}\ \lnot\lnot\mathrm R
\\[2ex]
\displaystyle
\frac{\Gamma\seq\lnot A}
     {\Gamma\seq A\arr B}\ \arr\mathrm R_1
&
\displaystyle
\frac{\Gamma\seq B}
     {\Gamma\seq A\arr B}\ \arr\mathrm R_2
\\[2ex]
\displaystyle
\frac{\Gamma\seq\lnot A}
     {\Gamma\seq\lnot(A\land B)}\ \lnot\land\mathrm R_1
&
\displaystyle
\frac{\Gamma\seq\lnot B}
     {\Gamma\seq\lnot(A\land B)}\ \lnot\land\mathrm R_2
\\[2ex]
\displaystyle
\frac{\Gamma\seq\lnot A\quad\Gamma\seq\lnot B}
     {\Gamma\seq\lnot(A\lor B)}\ \lnot\lor\mathrm R
&
\displaystyle
\frac{\Gamma\seq A\quad\Gamma\seq\lnot B}
     {\Gamma\seq\lnot(A\arr B)}\ \lnot\arr\mathrm R
\end{array}
\]

\end{definition}

The reading of the rules is the intended one. Each $\alpha$ and $\beta$ left rule decomposes a signed formula of the antecedent in exactly the way the corresponding block rule decomposes it, carrying the succedent $C$ unaltered from conclusion to premisses; the two-premiss left rules share their context $\Gamma$, which is the defining trait of the G3 style. The right rules introduce a compound, or a negated compound, into the succedent, and it is here that the involutive treatment of negation shows its economy: a negated conjunction is introduced from a negated conjunct, a negated disjunction from the two negated disjuncts, a negated implication from the antecedent and the negated consequent, precisely as the De Morgan laws prescribe, so that no rule leaves the analytic form. The initial sequents are of three kinds: the identity on a variable and on a negated variable, which supply the succedent side, and the closure $p,\lnot p,\Gamma\seq$, which is the atomic complementary pair with empty succedent --- the sequent form of a closed block.

The connection with the block calculus is now immediate, and we record it before proceeding, since it is what anchors the whole construction to the calculus of \cite{Cuconato2025IM}.

\begin{proposition}[The refutational fragment is the block calculus]\label{prop:blockfragment}
For every finite set of formulae $\Pi$, the sequent $\Pi\seq$ with empty succedent is derivable in $\Gt$ if and only if the block $\Pi$ admits a closed tableau in the block calculus of Definition~\textup{\ref{def:blockrules}}.
\end{proposition}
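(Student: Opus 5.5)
The proof goes by two inductions, one in each direction, supported by a few auxiliary lemmas. The first observation settles the shape of the derivations involved. In a $\Gt$-derivation of a sequent with empty succedent, every sequent has empty succedent. The reason is that the right rules and $\wkR$ all have a nonempty succedent in their conclusion, and the left rules carry the succedent $C$ unchanged from conclusion to premisses. So such a derivation uses only the left rules of types $\alpha$ and $\beta$ together with the initial sequent $\mathrm{(cl)}$. These rules match the block rules of Definition~\ref{def:blockrules} one to one: each removes the principal formula and adds the same components. What remains is to reconcile two pairs of differences: multisets against sets, and atomic closure against closure on an arbitrary complementary pair $\theta,\lnot\theta$.

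\emph{From tableaux to $\Gt$.} I first prove that $\theta,\lnot\theta,\Gamma\seq$ is derivable in $\Gt$ for every formula $\theta$, by induction on $\theta$.
\begin{itemize}
\item For $\theta=A\land B$, I apply $\land\mathrm L$ and then $\lnot\land\mathrm L$, and close the two branches $\lnot A,A,B,\Gamma\seq$ and $\lnot B,A,B,\Gamma\seq$ by the induction hypothesis.
\item The cases $\lor$ and $\arr$ are symmetric.
\item For $\theta=\lnot A$, the pair is $\lnot A,\lnot\lnot A$; one application of $\lnot\lnot\mathrm L$ reduces it to $\lnot A,A$.
\end{itemize}
Next I prove that left weakening $\Gamma\seq C\,/\,A,\Gamma\seq C$ is hp-admissible in $\Gt$. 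This is the standard induction on height, which works because every initial sequent has an arbitrary context.

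With these two lemmas I induct on the height of a closed tableau for $\Pi$. A closed leaf is an instance of the generalized closure lemma. For an $\alpha$ or $\beta$ step from $\Pi,\varphi$, the child block is the set $\Pi\cup\mathrm{comp}(\varphi)$. The induction hypothesis gives a derivation of that set with empty succedent. Weakening then supplies the multiset $\Pi,\mathrm{comp}(\varphi)$, duplicates included, and the corresponding left rule of $\Gt$ yields $\Pi,\varphi\seq$.

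\emph{From $\Gt$ to tableaux.} I prove by induction on height that if $\Gamma\seq$ is derivable then the underlying set of $\Gamma$ admits a closed tableau. I need two lemmas about the block calculus.

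The first is monotonicity: if $\Pi$ closes then every $\Pi'\supseteq\Pi$ closes. The proof is by induction on the tableau, applying the same rule to the same principal formula $\varphi$. The child of $\Pi'$ is $(\Pi'\setminus\{\varphi\})\cup\mathrm{comp}(\varphi)$, which contains the child $(\Pi\setminus\{\varphi\})\cup\mathrm{comp}(\varphi)$ of $\Pi$. This holds even when $\varphi$ already occurs in the added part, because blocks are sets.

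The second is redundancy: if a block $X\cup\{\varphi\}\cup\mathrm{comp}(\varphi)$ closes, then so does $X\cup\mathrm{comp}(\varphi)$. For a $\beta$ formula the same holds with one component on each branch. I prove this by induction on the tableau. When $\varphi$ itself is the principal formula, the child is already contained in $X\cup\mathrm{comp}(\varphi)$, so monotonicity finishes. Otherwise I permute the step past the deletion.

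The redundancy lemma is needed in the rule cases. Take $\land\mathrm L$ with premiss $A,B,\Gamma\seq$ and conclusion $A\land B,\Gamma\seq$. If $A\land B$ has a second copy in $\Gamma$, then the premiss set still contains $A\land B$, while the block rule applied to the conclusion set deletes it. Redundancy bridges exactly this gap. The $\mathrm{(cl)}$ case is immediate, since $p,\lnot p$ is a complementary pair.

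I expect the redundancy lemma to be the main obstacle, since it is the block-calculus counterpart of admissible contraction. As a fallback I would instead prove hp-admissibility of contraction in $\Gt$ for empty succedents, using hp-invertibility of the left rules. Every derivation could then be normalized so that no sequent in it repeats a formula, and the multiset/set mismatch would disappear.
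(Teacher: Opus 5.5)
Your strategy is the paper's own: read the tree upside down, close compound leaves by the closure half of Proposition~\ref{prop:idcl}, and note that with empty succedent only $\mathrm{(cl)}$ and the left rules can occur, and these are the block rules. You are, however, more careful than the paper, whose ``bijective correspondence'' silently ignores that antecedents are multisets while blocks are sets. Your tableau-to-$\Gt$ direction, with hp-weakening to restore duplicated components, is correct. (One trivial slip: since $C$ may be empty, $\wkR$ has the vacuous instance $\Gamma\seq\,/\,\Gamma\seq$. It is harmless and can simply be deleted from any derivation.)

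The gap is in your proof of the redundancy lemma. ``Permute the step past the deletion'' works when the root step decomposes a formula of $X$. It fails when the root step decomposes a component $\psi\in\mathrm{comp}(\varphi)$, say $\varphi=A\land B$ and $\psi=A$. The resulting child still contains $\varphi$ but no longer $\psi$. So it is not of the form $X''\cup\{\varphi\}\cup\mathrm{comp}(\varphi)$ (or $X''\cup\{\varphi,c\}$ in the $\beta$ version), and your induction hypothesis says nothing about it.

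You also leave out the base case in which the leaf closes on a pair involving $\varphi$ itself, that is, $\lnot\varphi\in X$, or $\varphi=\lnot\theta$ with $\theta\in X$. Then $X\cup\mathrm{comp}(\varphi)$ is not closed as it stands. It needs up to two further block steps, beginning with one on $\lnot\varphi$ (respectively $\theta$).

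The lemma is true and can be rescued by inducting on the pair (weight of $\varphi$, height of the tableau). In the bad case:
\begin{itemize}
\item height-preserving monotonicity puts $\psi$ back into the child;
\item the inner hypothesis deletes $\varphi$;
\item the outer hypothesis, applied to $\psi$ (of smaller weight, with its relevant components present), deletes $\psi$.
\end{itemize}

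Your fallback is simpler and correct as stated, and it is the version you should present. Contraction is hp-admissible in $\Gt$ (Proposition~\ref{prop:ctr}, which does not depend on the present result). So you can prove, by induction on height, that a derivable $\Gamma\seq$ with $\Gamma$ duplicate-free yields a closed tableau for $\Gamma$. Before applying the induction hypothesis, contract each premiss to its underlying set, at no cost in height. The principal formula then never has a second copy in the context, and the block step removes exactly what the sequent rule removes. Neither redundancy nor monotonicity is needed.
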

\begin{proof}
Read a closed tableau for $\Pi$ upside down. Its closed leaves $\{\theta,\lnot\theta\}$ become, at the atomic level, instances of $\mathrm{(cl)}$, and at the compound level are reduced to the atomic case by the generalized closure of Proposition~\ref{prop:idcl} below; each $\alpha$ step becomes the corresponding one-premiss left rule with empty succedent, and each $\beta$ bifurcation the corresponding shared-context two-premiss left rule with empty succedent. Since no right rule and no succedent weakening can be applied when the succedent is and remains empty, and since the left rules with empty succedent are literally the block rules, the derivations of $\Pi\seq$ in $\Gt$ and the closed tableaux for $\Pi$ are in bijective correspondence. The equivalence follows.
\end{proof}

The succedent slot behaves quite differently, and it is well to record at once what it does and does not provide.

\begin{proposition}[The succedent fragment]\label{prop:noempty}
No sequent with empty antecedent is derivable in $\Gt$: for no formula $C$ is $\seq C$ derivable, and neither is the empty sequent $\seq$.
\end{proposition}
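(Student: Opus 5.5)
The plan is an induction on the height of derivations in $\Gt$. It proves the stronger statement that every derivable sequent $\Gamma\seq C$ has a nonempty antecedent, i.e.\ $\Gamma$ contains at least one formula. This covers both cases of the proposition: with $C$ a formula it rules out $\seq C$, and with $C$ empty it rules out the empty sequent $\seq$. No earlier structural result is needed; the argument only inspects the shape of the initial sequents and rules of Definition~\ref{def:G3T}.

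The base case concerns the initial sequents. Each of $\mathrm{(id)}$, $(\mathrm{id}^{\lnot})$ and $\mathrm{(cl)}$ displays at least one formula in its antecedent ($p$, $\lnot p$, and $p,\lnot p$ respectively), whatever the multiset $\Gamma$ is. So every initial sequent has a nonempty antecedent.

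The inductive step splits the rules into two groups.
\begin{itemize}
\item \emph{Left rules.} Every left rule, of type $\alpha$ or $\beta$, has a conclusion whose antecedent contains the principal formula, such as $A\land B$, $\lnot\lnot A$ or $A\arr B$. Its antecedent is therefore nonempty, independently of the induction hypothesis.
\item \emph{Antecedent-preserving rules.} The rule $\wkR$ and every right rule ($\land\mathrm R$, $\lor\mathrm R_i$, $\lnot\lnot\mathrm R$, $\arr\mathrm R_i$, and the negated-compound right rules) have a conclusion whose antecedent $\Gamma$ is literally the antecedent of each premiss. By the induction hypothesis that antecedent is nonempty, so the conclusion's is too.
\end{itemize}
These exhaust the rules of $\Gt$, which completes the induction.

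There is no genuine obstacle. The only point needing care is to check that the list of rules in Definition~\ref{def:G3T} is exhaustive. In particular, no rule of $\Gt$ deletes antecedent formulae: there is no right rule discharging an antecedent formula (no $\arr$-introduction from $A,\Gamma\seq B$), and there is no cut. Such a rule is exactly what would break the invariant, so it is worth recording, as a remark after the proof, that the invariant holds precisely because $\Gt$ lacks any antecedent-discharging rule.
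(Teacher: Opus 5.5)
Your proof is correct and is essentially the paper's own argument. Both run an induction on derivation height showing that every derivable sequent of $\Gt$ has a nonempty antecedent: the initial sequents are the base, the left rules are settled by their principal formula, and $\wkR$ and the right rules are settled by the induction hypothesis, since they leave the antecedent unchanged.
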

\begin{proof}
We show, by induction on the height of a derivation, that the antecedent of every derivable sequent is non-empty. At height zero the three initial sequents of $\Gt$ display, respectively, $p$, $\lnot p$ and the pair $p,\lnot p$ in the antecedent, so the claim holds. For the inductive step, a left rule carries its principal formula in the antecedent of its conclusion, whose antecedent is therefore non-empty whatever the premisses are; and a right rule, like $\wkR$, leaves the antecedent unchanged in passing from premisses to conclusion, so that the induction hypothesis applies.
\end{proof}

Two consequences deserve to be drawn immediately. The first is that the succedent slot, introduced to make room for the introduction rules, does not by itself yield a calculus for classical logic: neither $\seq p\lor\lnot p$ nor $\seq p\arr p$ is derivable, since $\lor\mathrm R_i$ and $\arr\mathrm R_i$ would require $\seq p$ or $\seq\lnot p$ as a premiss. The second is that the equivalence recorded in Theorem~\ref{thm:ndequiv} below is to be read as an equivalence of \emph{consequence}: what the three systems share is the relation between a multiset of assumptions and a conclusion, not a stock of theorems. What the introduction rules contribute is the constructive scaffolding on which the normalization theorem rests, and Subsection~\ref{subsec:raa} shows how the succedent side is brought up to classical logic.

\subsection{Structural properties of $\Gt$}\label{subsec:structural}

The calculus $\Gt$ has the good structural behaviour expected of a G3-style system: the initial sequents extend to arbitrary formulae, the structural rules of weakening and contraction are admissible without increase in height, and the negation rules of the G0 style, which we shall need in the equivalence theorem, are admissible. We establish these facts in the order in which they depend on one another.

\begin{proposition}[Generalized initial sequents and closure]\label{prop:idcl}
For every formula $A$ and every finite multiset $\Gamma$, the sequents
\[
A,\Gamma\seq A
\qquad\text{and}\qquad
A,\lnot A,\Gamma\seq
\]
are derivable in $\Gt$.
\end{proposition}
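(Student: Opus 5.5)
The plan is to prove the two claims by induction on the size $|A|$ of $A$, where $|A|$ counts every occurrence of a propositional variable and of a connective, $\lnot$ included. The two halves do not depend on each other, so they can be proved one after the other. In each case the compound formula is first decomposed on the left by the relevant left rule, and, for identity, it is then rebuilt on the right by the matching right rule, with the induction hypothesis closing the resulting premisses. The context $\Gamma$ stays arbitrary throughout, so the induction hypothesis may be applied with $\Gamma$ enlarged by the other components.

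For identity $A,\Gamma\seq A$, the base cases $p$ and $\lnot p$ are $\mathrm{(id)}$ and $(\mathrm{id}^{\lnot})$. The unnegated compound cases are as follows.
\begin{itemize}
\item $A\land B$: apply $\land\mathrm L$ and then $\land\mathrm R$, closing the premisses $A,B,\Gamma\seq A$ and $A,B,\Gamma\seq B$ by the induction hypothesis.
\item $A\lor B$: apply $\lor\mathrm L$, and close the two premisses by $\lor\mathrm R_1$ and $\lor\mathrm R_2$ from the induction hypothesis.
\item $A\arr B$: apply $\arr\mathrm L$. The premiss $\lnot A,\Gamma\seq A\arr B$ comes by $\arr\mathrm R_1$ from $\lnot A,\Gamma\seq\lnot A$, and the premiss $B,\Gamma\seq A\arr B$ comes by $\arr\mathrm R_2$.
\end{itemize}

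The negated compound cases $\lnot C$ are split according to the shape of $C$.
\begin{itemize}
\item $\lnot\lnot C$: apply $\lnot\lnot\mathrm L$ and then $\lnot\lnot\mathrm R$, closing with $C,\Gamma\seq C$.
\item $\lnot(C\land D)$: apply $\lnot\land\mathrm L$, and close the two branches by $\lnot\land\mathrm R_1$ and $\lnot\land\mathrm R_2$ from $\lnot C,\Gamma\seq\lnot C$ and $\lnot D,\Gamma\seq\lnot D$.
\item $\lnot(C\lor D)$: apply $\lnot\lor\mathrm L$ and then $\lnot\lor\mathrm R$, from $\lnot C,\lnot D,\Gamma\seq\lnot C$ and $\lnot C,\lnot D,\Gamma\seq\lnot D$.
\item $\lnot(C\arr D)$: apply $\lnot\arr\mathrm L$ and then $\lnot\arr\mathrm R$, from $C,\lnot D,\Gamma\seq C$ and $C,\lnot D,\Gamma\seq\lnot D$.
\end{itemize}

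The only delicate point, and the one I expect to be the main obstacle, is that the induction hypothesis is invoked on formulae such as $\lnot A$ and $\lnot C$, which are not subformulae of the formula being treated. One must therefore check that the chosen measure decreases in each such case. In the $\arr$ case, $|\lnot A|=|A|+1<|A|+|B|+1=|A\arr B|$, because $|B|\ge 1$. In the negated cases, $|\lnot C|<|\lnot(C\circ D)|$ for every binary connective $\circ$, and $|C|<|\lnot\lnot C|$. Once symbol-counting size is fixed as the measure, every call of the induction hypothesis is on a strictly smaller formula.

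For closure $A,\lnot A,\Gamma\seq$, the base case $A=p$ is $\mathrm{(cl)}$. Every other case reduces to closure on the immediate components of $A$, so ordinary induction on $|A|$ suffices here, with the antecedent read as a multiset so that the order of $A$ and $\lnot A$ is immaterial.
\begin{itemize}
\item $\lnot C$: the pair $\lnot C,\lnot\lnot C$ is reduced by $\lnot\lnot\mathrm L$ to $\lnot C,C,\Gamma\seq$, which is the induction hypothesis for $C$. This covers $\lnot p$ as a special case, since $\lnot p,\lnot\lnot p,\Gamma\seq$ reduces to an instance of $\mathrm{(cl)}$.
\item $A\land B$: apply $\land\mathrm L$ and then $\lnot\land\mathrm L$, leaving $A,B,\lnot A,\Gamma\seq$ and $A,B,\lnot B,\Gamma\seq$.
\item $A\lor B$: apply $\lnot\lor\mathrm L$ and then $\lor\mathrm L$, leaving $A,\lnot A,\lnot B,\Gamma\seq$ and $B,\lnot A,\lnot B,\Gamma\seq$.
\item $A\arr B$: apply $\lnot\arr\mathrm L$ to get $A,\lnot B$, and then $\arr\mathrm L$, leaving $\lnot A,A,\lnot B,\Gamma\seq$ and $B,A,\lnot B,\Gamma\seq$.
\end{itemize}
All the leaves listed are instances of the induction hypothesis with an enlarged context.
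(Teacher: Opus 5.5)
Your proof is correct and follows the paper's route: induction on formula size, with left decomposition followed by right recomposition for identity, and closure reduced to the components by the left rules. As you observe, the two halves never call each other, so the paper's ``simultaneous'' induction is not actually needed. Your symbol-counting measure is the right choice. Under the paper's stated measure, the number of connectives, the $\arr$ identity case invokes the induction hypothesis on $\lnot A$ for $A\arr B$, and this formula has the same number of connectives when $B$ is a variable (e.g.\ $\lnot(p\land q)$ for $(p\land q)\arr r$). Your measure therefore closes a small gap that the paper glosses over.
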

\begin{proof}
The proof is by simultaneous induction on the number of connectives of $A$; the case analysis is on the principal connective, and every reduction passes to formulae of strictly smaller weight, so that the induction hypothesis applies to them. If $A$ is a variable $p$, then $A,\Gamma\seq A$ is $\mathrm{(id)}$ and $A,\lnot A,\Gamma\seq$ is $\mathrm{(cl)}$. If $A$ is a negated variable $\lnot p$, then $A,\Gamma\seq A$ is $(\mathrm{id}^{\lnot})$, and $A,\lnot A,\Gamma\seq$, that is $\lnot p,\lnot\lnot p,\Gamma\seq$, is obtained from $\mathrm{(cl)}$ by one application of $\lnot\lnot\mathrm L$ to $\lnot\lnot p$, which yields $\lnot p,p,\Gamma\seq$.

For the inductive step we treat two representative cases and indicate the pattern for the others. Let first $A=B\land D$. The identity $B\land D,\Gamma\seq B\land D$ is derived by applying $\land\mathrm R$ to the two premisses $B\land D,\Gamma\seq B$ and $B\land D,\Gamma\seq D$; each is obtained by $\land\mathrm L$ from $B,D,\Gamma\seq B$, respectively $B,D,\Gamma\seq D$, which are available by the induction hypothesis for $B$ and for $D$, of smaller weight. The closure $B\land D,\lnot(B\land D),\Gamma\seq$ is derived by applying $\lnot\land\mathrm L$ to $\lnot(B\land D)$, reducing to $\lnot B,B\land D,\Gamma\seq$ and $\lnot D,B\land D,\Gamma\seq$; the first follows by $\land\mathrm L$ from $\lnot B,B,D,\Gamma\seq$, the second by $\land\mathrm L$ from $\lnot D,B,D,\Gamma\seq$, and both are instances of the closure available by the induction hypothesis for $B$ and for $D$. Let next $A=\lnot(B\land D)$, a negated conjunction. The identity $\lnot(B\land D),\Gamma\seq\lnot(B\land D)$ is derived by applying $\lnot\land\mathrm L$ to the antecedent occurrence, reducing to $\lnot B,\Gamma\seq\lnot(B\land D)$ and $\lnot D,\Gamma\seq\lnot(B\land D)$; the first follows by $\lnot\land\mathrm R_1$ from $\lnot B,\Gamma\seq\lnot B$, the second by $\lnot\land\mathrm R_2$ from $\lnot D,\Gamma\seq\lnot D$, both available by the induction hypothesis, since $\lnot B$ and $\lnot D$ have strictly smaller weight than $\lnot(B\land D)$. The closure $\lnot(B\land D),\lnot\lnot(B\land D),\Gamma\seq$ is obtained by $\lnot\lnot\mathrm L$ from $\lnot(B\land D),B\land D,\Gamma\seq$, which is the closure for $B\land D$, of smaller weight. The cases $A=B\lor D$, $A=B\arr D$ and their negations are analogous, using $\lor\mathrm R_i$, $\arr\mathrm R_i$, $\lnot\lor\mathrm R$, $\lnot\arr\mathrm R$ on the succedent side and the matching left rules on the antecedent side; the case $A=\lnot\lnot B$ reduces to $B$ by the involution rules $\lnot\lnot\mathrm L,\lnot\lnot\mathrm R$. In every case the succedent side uses a right rule and the closure side a left rule, and every appeal to the induction hypothesis is to a formula of strictly smaller weight.
\end{proof}

\begin{proposition}[Height-preserving admissibility of weakening]\label{prop:wk}
The rules of left and right weakening,
\[
\frac{\Gamma\seq C}{A,\Gamma\seq C}\ \wkL
\qquad
\frac{\Gamma\seq}{\Gamma\seq C}\ \wkR ,
\]
are hp-admissible in $\Gt$; the second is in fact a primitive rule.
\end{proposition}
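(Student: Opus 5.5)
The right weakening $\wkR$ is listed among the primitive rules of $\Gt$ in Definition~\ref{def:G3T}, so there is nothing to prove for it: its conclusion is obtained from a derivation of $\Gamma\seq$ of height $n$ by one application of the rule. (If height preservation in the strict sense is wanted, note that its premiss has empty succedent and it is the rule itself that adds the formula $C$.) The substance of the proposition is therefore the hp-admissibility of $\wkL$. We shall prove, by induction on $n$, that $\Gt\vdash_n\Gamma\seq C$ implies $\Gt\vdash_n A,\Gamma\seq C$ for every formula $A$, where $C$ is either a formula or empty.

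\emph{Base case, $n=0$.} Here $\Gamma\seq C$ is an initial sequent. Each of $\mathrm{(id)}$, $(\mathrm{id}^{\lnot})$ and $\mathrm{(cl)}$ carries an arbitrary side context $\Gamma'$. Hence $A,\Gamma\seq C$ is again an instance of the same initial sequent, with context $A,\Gamma'$ in place of $\Gamma'$. This is the point at which the G3 design pays off: weakening is absorbed into the contexts of the axioms, exactly as the closure criterion absorbs it in the block calculus.

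\emph{Inductive step, $n=k+1$.} We split according to the last rule $R$ of the derivation.
\begin{itemize}
\item If $R$ is $\wkR$, its premiss is $\Gamma\seq$, derivable with height $k$. The induction hypothesis gives $A,\Gamma\seq$ with height at most $k$, and $\wkR$ then gives $A,\Gamma\seq C$ with height at most $k+1$.
\item If $R$ is a right rule, its premisses have the form $\Gamma\seq D_i$ with the same antecedent. We weaken each premiss by the induction hypothesis and reapply $R$.
\item If $R$ is a left rule of type $\alpha$ or $\beta$ with principal formula $P$, the conclusion is $P,\Gamma'\seq C$ and the premisses are of the form $\Sigma_i,\Gamma'\seq C$. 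We apply the induction hypothesis to each premiss to obtain $A,\Sigma_i,\Gamma'\seq C$ with height at most $k$, and then apply $R$ again. Since the $\beta$ rules share their context, the same $A$ is added to both premisses and the rule still applies verbatim.
\end{itemize}
In every case the height does not exceed $k+1$.

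No step is a real obstacle. The only point needing care is that every rule of $\Gt$ is context-schematic: no rule imposes a side condition on $\Gamma$ or on the succedent that adding $A$ to the antecedent could violate. One might worry that the empty-succedent fragment or $\wkR$ constrains the antecedent, but it does not, so the induction goes through uniformly. It is also worth remarking that $A$ may be compound, and that no induction on $A$ is needed, in contrast with Proposition~\ref{prop:idcl}.
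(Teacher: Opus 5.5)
Your proof is the paper's proof: $\wkR$ is set aside as primitive, and $\wkL$ is shown hp-admissible by induction on height, using that the initial sequents carry an arbitrary context and that every rule of $\Gt$, with its shared contexts, can be re-applied after weakening its premisses. The one thing to drop is your parenthetical on strict height preservation for $\wkR$, which establishes nothing and cannot be made to: $p,\lnot p\seq$ has height $0$, yet $p,\lnot p\seq q$ with $q\neq p$ has no height-$0$ derivation, so $\wkR$ preserves height only in the trivial sense of being a primitive rule (at cost $+1$), and the paper's ``nothing to prove'' claims no more than that.
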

\begin{proof}
Right weakening $\wkR$ is a rule of $\Gt$, so there is nothing to prove for it. For $\wkL$ we argue by induction on the height of the derivation of $\Gamma\seq C$. If $\Gamma\seq C$ is an initial sequent, then $A,\Gamma\seq C$ is again an initial sequent of the same kind, since the initial sequents allow an arbitrary antecedent context. If $\Gamma\seq C$ is the conclusion of a rule $R$, we apply the induction hypothesis to the premisses of $R$, adding $A$ to their antecedents --- which the shared-context format permits uniformly, and which respects the side-condition-free character of the propositional rules --- and then re-apply $R$. The height does not increase.
\end{proof}

\begin{proposition}[Invertibility]\label{prop:inv}
Every left rule of $\Gt$ is hp-invertible, and so are the right rules $\land\mathrm R$ and $\lnot\lor\mathrm R$; the one-premiss left rules and $\land\mathrm R$, $\lnot\lor\mathrm R$ are invertible with respect to each of their premisses.
\end{proposition}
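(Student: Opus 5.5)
I will argue by induction on the height $n$ of the given derivation, treating each rule separately. The claim is that if the conclusion of a rule is derivable with height $\le n$, then each of its premisses is derivable with height $\le n$. The argument is the standard G3 one. Fix a left rule $R$ with principal formula $F$, and suppose $F,\Gamma\seq C$ is derivable with height $n$.

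If the sequent is initial, the principal formula $F$ is compound. It therefore cannot be one of the displayed atoms or negated atoms of $\mathrm{(id)}$, $(\mathrm{id}^{\lnot})$ or $\mathrm{(cl)}$, and so lies in the context. The premisses of $R$ are then initial sequents of the same kind, because those sequents allow arbitrary context.

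If the last rule is $R$ itself with $F$ principal, the premisses are already derived at height $\le n-1$.

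If the last rule has a different principal formula, then $F$ is in the context of every premiss. This covers $\wkR$, the right rules, and left rules on another formula. I apply the induction hypothesis to each premiss to decompose $F$ there, and then re-apply the last rule. Two points make this work. First, the left rules carry the succedent $C$ unchanged. Second, the right rules and $\wkR$ leave the antecedent context untouched. So the context change commutes with the rule, and the height does not increase.

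Invertibility with respect to each premiss of the one-premiss left rules is the same statement, since there is only one premiss. For the two-premiss $\beta$ rules, I handle each premiss separately by the same induction.

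For $\land\mathrm R$ the claim is that $\Gamma\seq A\land B$ derivable at height $n$ gives $\Gamma\seq A$ and $\Gamma\seq B$ at height $\le n$; $\lnot\lor\mathrm R$ is handled identically. The cases are as follows.

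\begin{itemize}
\item An initial sequent cannot have $A\land B$ in the succedent, since only literals occur there.
\item If the last rule is $\land\mathrm R$, its premisses are the required sequents.
\item If the last rule is $\wkR$ from $\Gamma\seq$, I apply $\wkR$ to get $\Gamma\seq A$ and $\Gamma\seq B$ at the same height.
\item If the last rule is a left rule, I apply the induction hypothesis to its premisses and re-apply the left rule. This works because left rules carry the succedent through.
\item No other right rule can conclude $A\land B$.
\end{itemize}

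I expect the main obstacle to lie in the right-rule case for $\wkR$. For the right inversions, $\wkR$ is harmless only because it is itself a rule and can be reapplied. For the left inversions, a derivation ending in $\wkR$ from $F,\Gamma\seq$ forces me to invert $F$ in an empty-succedent sequent first and then reapply $\wkR$. That is fine, but I must make sure the induction is carried out uniformly over both succedent types.

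I also need to check that no left rule can have $F$ principal in a non-matching way, given the mix of negated and unnegated forms. This relies on the observation that the principal formulas of distinct left rules have distinct shapes: $A\land B$, $\lnot\lnot A$, $\lnot(A\lor B)$, $\lnot(A\arr B)$, $\lnot(A\land B)$, $A\lor B$, $A\arr B$. Hence a compound $F$ is principal for exactly one left rule.

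Finally, I will remark that the other right rules, $\lor\mathrm R_i$, $\arr\mathrm R_i$ and $\lnot\land\mathrm R_i$, are not claimed invertible. As Proposition~\ref{prop:noempty}-style counterexamples such as $p\lor q\seq p\lor q$ show, they are not.
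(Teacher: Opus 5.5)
Your proof is correct and follows the argument the paper has in mind. The paper only sketches the standard induction on the height of the derivation of the conclusion, distinguishing whether the formula under analysis is principal in the last rule and treating each premiss of the two-premiss rules separately, and leaves the routine cases to Negri--von Plato. You supply those cases, including the $\wkR$ case, the fact that the principal formula of a left rule is never a literal, and the fact that $A\land B$ and $\lnot(A\lor B)$ cannot be the succedent of an initial sequent. Your base case is in fact slightly cleaner than the paper's sketch: the premisses are themselves initial sequents, so no appeal to $\wkL$ is needed.
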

\begin{proof}
The one-premiss left rules $\land\mathrm L$, $\lnot\lnot\mathrm L$, $\lnot\lor\mathrm L$, $\lnot\arr\mathrm L$ are invertible because their principal formula is, in the presence of Proposition~\ref{prop:idcl} and Proposition~\ref{prop:wk}, interderivable with the multiset of its active components; the formal argument is the standard induction on the height of the derivation of the conclusion, distinguishing whether the principal formula of the last rule is or is not the one under analysis, and using $\wkL$ in the base case. For the two-premiss left rules $\lnot\land\mathrm L$, $\lor\mathrm L$, $\arr\mathrm L$ and for $\land\mathrm R$, $\lnot\lor\mathrm R$ the same induction applies to each premiss separately. We omit the routine details, which follow \cite[Ch.~3]{NegriVonPlato2001}.
\end{proof}

\begin{proposition}[Height-preserving admissibility of contraction]\label{prop:ctr}
The rule of left contraction
\[
\frac{A,A,\Gamma\seq C}{A,\Gamma\seq C}\ \ctrL
\]
is hp-admissible in $\Gt$.
\end{proposition}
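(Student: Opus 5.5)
We will prove the statement by induction on the height $n$ of the derivation of $A,A,\Gamma\seq C$, in the standard G3 manner, in the form: if $\Gt\vdash_n A,A,\Gamma\seq C$ then $\Gt\vdash_n A,\Gamma\seq C$. The tool doing the real work is the height-preserving invertibility of the left rules, Proposition~\ref{prop:inv}.

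\emph{Base case.} Suppose $A,A,\Gamma\seq C$ is an initial sequent. Then $A,\Gamma\seq C$ is again an initial sequent of the same kind. The principal atoms of $\mathrm{(id)}$, $(\mathrm{id}^{\lnot})$ or $\mathrm{(cl)}$ need at most one copy of $A$, and the remaining copy, if it lay in the context, is simply dropped from $\Gamma$. One point needs checking: in $\mathrm{(cl)}$ with $A=p$ a single $p$ still pairs with $\lnot p$, and similarly for $A=\lnot p$.

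\emph{Inductive step, easy cases.} Let the last rule $R$ have height $n+1$.
\begin{itemize}
\item If $R$ is a right rule or $\wkR$, the antecedent is unchanged in the premisses. We apply the induction hypothesis to each premiss and re-apply $R$.
\item If $R$ is a left rule whose principal formula is not an occurrence of $A$, then both copies of $A$ lie in the shared context of every premiss. We contract them by the induction hypothesis and re-apply $R$.
\end{itemize}

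\emph{Inductive step, principal case.} Here one copy of $A$ is principal in $R$, and we proceed by the shape of $A$.
\begin{itemize}
\item For a one-premiss rule, say $A=B\land D$, the premiss is $B,D,B\land D,\Gamma\seq C$ of height $n$. By hp-invertibility of $\land\mathrm L$ (Proposition~\ref{prop:inv}) we get $B,D,B,D,\Gamma\seq C$ with height $\le n$. Two applications of the induction hypothesis (to $B$ and to $D$) give $B,D,\Gamma\seq C$ with height $\le n$, and $\land\mathrm L$ yields $B\land D,\Gamma\seq C$ with height $\le n+1$.
\item The cases $\lnot\lnot\mathrm L$, $\lnot\lor\mathrm L$ and $\lnot\arr\mathrm L$ are identical.
\item For a two-premiss rule, say $A=B\lor D$, the premisses are $B,B\lor D,\Gamma\seq C$ and $D,B\lor D,\Gamma\seq C$. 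Inverting $\lor\mathrm L$ on the residual copy in each premiss, taking the matching branch, gives $B,B,\Gamma\seq C$ and $D,D,\Gamma\seq C$ with height $\le n$. The induction hypothesis and $\lor\mathrm L$ then conclude.
\item The cases $\lnot\land\mathrm L$ and $\arr\mathrm L$ are handled the same way.
\end{itemize}

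The key subtlety is that the induction hypothesis is applied to contractions on formulae other than $A$, namely its components. So the induction must be purely on height, quantified over all formulae and contexts, rather than on $A$. This is legitimate because height never increases under inversion.

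\textbf{Main obstacle.} The genuine obstacle is that Proposition~\ref{prop:inv} is stated only sketchily. The argument above requires height-preserving inversion in the strict sense, for every left rule and each premiss separately, in the presence of arbitrary succedent $C$, including the empty one and $\wkR$. I would first verify this: the inversion lemma's base case uses $\wkL$ from Proposition~\ref{prop:wk}, which is height-preserving, and initial sequents have only atomic principal formulae. Hence a compound $A$ is always in the context of an initial sequent and inversion stays at height $0$, which is what makes everything height-preserving.
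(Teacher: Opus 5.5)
Your proof is correct and follows the paper's own argument: induction on the height of the derivation, with the principal case settled by the hp-invertibility of the left rules (Proposition~\ref{prop:inv}), followed by the induction hypothesis on the components and re-application of the rule. Your point that the induction is on height alone, quantified over all formulae and contexts, is the right reading of the paper's closing remark about the smaller complexity of the components, which the argument does not actually need.
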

\begin{proof}
By induction on the height of the derivation of $A,A,\Gamma\seq C$, with the standard case distinction. If the sequent is initial, so is its contractum. If the last rule does not have one of the two displayed occurrences of $A$ as principal formula, the induction hypothesis is applied to its premisses and the rule is re-applied. If the last rule has one occurrence of $A$ as principal formula, then by the hp-invertibility of that rule (Proposition~\ref{prop:inv}) the other occurrence of $A$ may be replaced by its active components already in the premisses, at no cost in height; contracting on the components, whose height of derivation is smaller, by the induction hypothesis, and re-applying the rule, yields the contractum with no increase in height. The measure decreases because the components are of strictly smaller complexity, so the nested contractions terminate.
\end{proof}

The cut rule is admissible in $\Gt$. This is the counterpart, for the sequent presentation, of the cut-freeness of the block calculus; it can be obtained by the standard method, and we place it here because the admissibility of the explosion rule of the G0 style, which we establish next, rests upon it.

\begin{theorem}[Cut admissibility in $\Gt$]\label{thm:cutG3}
The cut rule
\[
\frac{\Gamma\seq A\quad A,\Delta\seq C}{\Gamma,\Delta\seq C}\ \mathrm{Cut}
\]
is admissible in $\Gt$.
\end{theorem}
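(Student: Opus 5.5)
The proof follows the standard Dragalin-style argument for G3 systems, as in \cite[Ch.~3--4]{NegriVonPlato2001}. I would prove that $\mathrm{Cut}$ is admissible by a main induction on the weight of the cut formula $A$, counting $\lnot$ as a connective, and a subordinate induction on the cut-height, the sum of the heights of the derivations of $\Gamma\seq A$ and $A,\Delta\seq C$. Since $A$ stands in the succedent of the left premiss, that succedent is always non-empty. An empty succedent can occur only in the right premiss and the conclusion. Throughout I use the hp-admissibility of $\wkL$ and $\ctrL$ (Propositions~\ref{prop:wk} and~\ref{prop:ctr}) to adjust contexts, so that the multiset $\Gamma,\Delta$ can always be reached.

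The first group of cases is where the left premiss is not a right rule with $A$ principal.
\begin{itemize}
\item If $\Gamma\seq A$ is $\mathrm{(id)}$ or $(\mathrm{id}^{\lnot})$, then $A\in\Gamma$. The conclusion $\Gamma,\Delta\seq C$ follows from $A,\Delta\seq C$ by $\wkL$.
\item If $\Gamma\seq A$ is an instance of $\mathrm{(cl)}$, it comes from a closure instance $\Gamma\seq$. Then $\Gamma,\Delta\seq C$ is again initial, with $\wkR$ added if $C$ is non-empty.
\item If the last rule is $\wkR$ from $\Gamma\seq$, I weaken this to $\Gamma,\Delta\seq$ and apply $\wkR$ when $C$ is non-empty.
\item If the last rule is a left rule, it carries the succedent $A$ unchanged into its premisses. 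I cut each premiss against $A,\Delta\seq C$, which has lower cut-height, and reapply the left rule. Here the shared-context format of the $\beta$ rules is respected because the same $\Delta$ is added to both branches.
\end{itemize}
The second group is where $A$ is not principal in the right premiss. If that premiss is initial with $A$ in the context, the conclusion is initial. If its last rule is $\wkR$ from $A,\Delta\seq$, I cut with the premiss to get $\Gamma,\Delta\seq$ and then apply $\wkR$. If its last rule is a left or right rule with $A$ in the context, I permute the cut upward into the premisses and reapply the rule.

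This leaves two kinds of principal cases. The first is where the right premiss is initial with $A$ principal.
\begin{itemize}
\item For $\mathrm{(id)}$, the right premiss is $p,\Delta'\seq p$ and the conclusion follows from $\Gamma\seq p$ by weakening.
\item For $\mathrm{(cl)}$, the right premiss is $p,\lnot p,\Delta'\seq$, with cut formula $p$ or $\lnot p$. A literal cut formula can be principal in a left premiss only through an identity axiom, and the identity case has already been handled above. So these cases never need to be treated as principal: the earlier cases on the left premiss absorb them.
\end{itemize}
The second kind is a right rule against the matching left rule, and each is reduced to cuts on strictly lighter formulae.
\begin{itemize}
\item $\land\mathrm R/\land\mathrm L$: I cut on $A_1$, then on $A_2$, and contract the duplicated $\Gamma$.
\item $\lor\mathrm R_i/\lor\mathrm L$: I cut the single premiss of $\lor\mathrm R_i$ with the $i$-th premiss of $\lor\mathrm L$.
\item $\lnot\lnot\mathrm R/\lnot\lnot\mathrm L$: one cut on the component.
\item $\arr\mathrm R_1/\arr\mathrm L$: a cut on $\lnot A_1$ against the left branch.
\item $\arr\mathrm R_2/\arr\mathrm L$: a cut on $A_2$ against the right branch.
\item $\lnot\land\mathrm R_i/\lnot\land\mathrm L$: a cut on $\lnot A_i$ against the $i$-th branch.
\item $\lnot\lor\mathrm R/\lnot\lor\mathrm L$ and $\lnot\arr\mathrm R/\lnot\arr\mathrm L$: two successive cuts followed by contraction.
\end{itemize}
In every such case the new cut formulae ($A_i$, $\lnot A_i$) weigh strictly less than $A$, so the main induction hypothesis applies regardless of height.

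The step I expect to need the most care is the bookkeeping around the empty succedent and the closure axioms. One must check that $\mathrm{(cl)}$ never forces a genuine principal-principal case, and that every use of $\wkR$ and every permutation keeps the succedent correct when $C$ is empty. I would also verify that the weight measure strictly decreases when passing from $\lnot(A_1\land A_2)$ to $\lnot A_i$, and from $A_1\arr A_2$ to $\lnot A_1$. This holds provided $\lnot$ counts as a connective, since $\lnot A_1$ then has weight at most that of $A_1$ plus one, which is strictly less than the weight of $A_1\arr A_2$.
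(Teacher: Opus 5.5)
Your proof is correct and follows the same double induction on weight and cut-height as the paper. The one local difference is the closure axiom. The paper handles it by a separate height-preserving transformation of a derivation of $\Gamma\seq p$ into one of $\lnot p,\Gamma\seq$. You instead observe that a literal in the succedent can only come from $\mathrm{(id)}$, $(\mathrm{id}^{\lnot})$, $\wkR$ or a left rule, so your left-premiss analysis absorbs that case. This is just the inductive unfolding of the paper's transformation, and it is tidier.

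One slip needs correcting. With weight taken as the number of connectives, $\lnot A_1$ weighs exactly as much as $A_1\arr p$. So your strict-decrease claim for $\arr\mathrm R_1/\arr\mathrm L$ fails when the consequent is atomic. The proviso you need is that atoms have positive weight, as in Negri--von Plato, not merely that $\lnot$ counts as a connective. Alternatively, note that this reduced cut is between the immediate subderivations, so the cut-height drops and the lexicographic induction still applies. The paper's stated measure has the same blemish and is rescued in the same way.
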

\begin{proof}
The proof is by the standard double induction on the pair $(w,h)$, where $w$ is the weight of the cut formula $A$ --- the number of connectives in $A$ --- and $h$ is the sum of the heights of the derivations of the two premisses, following the method of \cite[Ch.~4]{NegriVonPlato2001} (see also \cite{TroelstraSchwichtenberg2000}). One considers a topmost cut and distinguishes three configurations. If one premiss is an initial sequent, the cut is removed outright. Against $\mathrm{(id)}$ or $(\mathrm{id}^{\lnot})$ the conclusion follows by left weakening (Proposition~\ref{prop:wk}). The only configuration proper to the refutational reading is a cut whose right premiss is the closure axiom $\mathrm{(cl)}$, say $p,\lnot p,\Delta\seq$ with cut formula $p$ and left premiss $\Gamma\seq p$: here the conclusion $\lnot p,\Gamma,\Delta\seq$ is obtained by the height-preserving transformation that, throughout the derivation of $\Gamma\seq p$, replaces the succedent occurrence of $p$ by the antecedent formula $\lnot p$ with empty succedent, turning every $\mathrm{(id)}$ leaf concluding $p$ into a $\mathrm{(cl)}$ leaf and every application of $\wkR$ into an application of $\wkL$, and then weakening in $\Delta$; the case with cut formula $\lnot p$ is symmetric. If the cut formula is not principal in one of the premisses, the cut is permuted upward past the last rule of that premiss, which strictly decreases $h$ at unchanged $w$; the invertibility of the rules (Proposition~\ref{prop:inv}) and the hp-admissibility of the structural rules guarantee that the permutation is licit, and the side-condition-free character of the propositional rules that it meets no obstruction. If the cut formula is principal in both premisses, the cut is reduced to cuts on the immediate components, of strictly smaller weight $w$: for $A=B\land D$ the cut between $\land\mathrm R$ and $\land\mathrm L$ reduces to a cut on $B$ and one on $D$; for $A=B\lor D$ between $\lor\mathrm R_i$ and $\lor\mathrm L$ to a cut on $B$ or on $D$; for $A=B\arr D$, treated as the material $\lnot B\lor D$, between $\arr\mathrm R_i$ and $\arr\mathrm L$ to a cut on $\lnot B$ or on $D$; for $A=\lnot\lnot B$ between $\lnot\lnot\mathrm R$ and $\lnot\lnot\mathrm L$ to a cut on $B$; and for the negated compounds $A=\lnot(B\land D),\lnot(B\lor D),\lnot(B\arr D)$ between the corresponding De Morgan right and left rules to cuts on $\lnot B$, $\lnot D$, or their pair, in every case of weight strictly below $w$. Since each reduction lowers $(w,h)$ in the lexicographic order, and a derivation contains finitely many cuts, all cuts are eliminated.
\end{proof}

We may now establish the admissibility of the explosion rule of the G0 style, which is the analogue, for the involutive negation of the block calculus, of the explosion rule that Kamide and Negri admit in their G3-style calculi \cite[Prop.~2.11]{KamideNegri2025}. It will be used in the equivalence theorem.

\begin{proposition}[Admissibility of explosion]\label{prop:ex}
The rule
\[
\frac{\Gamma\seq\lnot A\quad\Delta\seq A}{\Gamma,\Delta\seq C}\ \lnot\mathrm{EX}
\]
is admissible in $\Gt$.
\end{proposition}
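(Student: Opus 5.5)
The plan is to reduce the explosion rule to two cuts against the generalized closure sequent of Proposition~\ref{prop:idcl}, and then to use succedent weakening to reach an arbitrary $C$. No new induction should be needed: the work is already carried by Theorem~\ref{thm:cutG3} and Proposition~\ref{prop:idcl}.

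\textbf{Step 1.} By Proposition~\ref{prop:idcl} with empty context, the closure sequent $A,\lnot A\seq$ is derivable in $\Gt$.

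\textbf{Step 2.} Cut it against the left premiss $\Gamma\seq\lnot A$, with cut formula $\lnot A$. This uses the cut rule of Theorem~\ref{thm:cutG3} in the instance where the right premiss has empty succedent. That instance is covered, since $C$ ranges over formulae and the empty succedent. The cut yields $\Gamma,A\seq$.

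\textbf{Step 3.} Cut $\Delta\seq A$ against $A,\Gamma\seq$, with cut formula $A$. By Theorem~\ref{thm:cutG3} this gives $\Gamma,\Delta\seq$ with empty succedent.

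\textbf{Step 4.} If $C$ is empty we are done. Otherwise one application of the primitive rule $\wkR$ yields $\Gamma,\Delta\seq C$.

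The only point needing care is the reliance on Theorem~\ref{thm:cutG3}. It must indeed cover cuts whose right premiss has empty succedent; its closure-axiom case shows the author intends exactly that. It must also not itself depend on Proposition~\ref{prop:ex}, or the argument would be circular. Its proof invokes only Propositions~\ref{prop:idcl}, \ref{prop:wk}, \ref{prop:inv} and \ref{prop:ctr}, so the order of dependence is sound.

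A direct alternative would avoid cut: an induction on the weight of $A$ and the heights of the two premiss derivations, mirroring the principal-principal cases of the cut proof with $\lnot A$ and $A$ playing complementary roles. I would not pursue it, since the two-cut argument is immediate once cut admissibility is available.
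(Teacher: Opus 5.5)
Your proof is correct and is essentially the paper's argument. Both make two cuts against the closure sequent $A,\lnot A\seq$ of Proposition~\ref{prop:idcl}, follow them with one application of $\wkR$, and check, as the paper does, that Theorem~\ref{thm:cutG3} does not itself depend on explosion. The only difference is the order of the cuts: the paper cuts on $A$ first, obtaining $\lnot A,\Delta\seq$, and then on $\lnot A$, whereas you cut on $\lnot A$ first; this makes no difference.
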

\begin{proof}
Assume $\Gt\vdash\Gamma\seq\lnot A$ and $\Gt\vdash\Delta\seq A$. By the generalized closure of Proposition~\ref{prop:idcl}, the sequent $A,\lnot A\seq$ is derivable. A cut on $A$ between $\Delta\seq A$ and $A,\lnot A\seq$ yields $\lnot A,\Delta\seq$; a cut on $\lnot A$ between $\Gamma\seq\lnot A$ and $\lnot A,\Delta\seq$ yields $\Gamma,\Delta\seq$; and one application of $\wkR$ delivers $\Gamma,\Delta\seq C$. Both cuts are admissible by Theorem~\ref{thm:cutG3}, whose proof makes no appeal to the present rule.
\end{proof}

\subsection{The calculus $\Gz$}\label{subsec:G0T}

The G0-style calculus is obtained from $\Gt$ by the transformation that Kamide and Negri apply to their G3-style calculi \cite[Def.~2.3]{KamideNegri2025}: the two-premiss rules are given independent contexts, the structural rules of weakening and contraction are made primitive, the initial sequents are generalized to arbitrary formulae, and the explosion rule $\lnot\mathrm{EX}$ --- the sequent transcription of the interaction of a formula with its negation, in the manner of Gentzen's explosion \cite{Gentzen1935} --- is adopted as primitive.

\begin{definition}[The calculus $\Gz$]\label{def:G0T}
The calculus $\Gz$ consists of the following initial sequents and rules, where again $C$ is a formula or empty.

\smallskip
\noindent\emph{Initial sequents} (generalized): for every formula $A$,
\[
A\seq A\ \ (\mathrm{id}^{s})
\qquad
A,\lnot A\seq\ \ (\mathrm{cl}^{s}).
\]

\noindent\emph{Structural rules.}
\[
\frac{\Gamma\seq C}{A,\Gamma\seq C}\ \wkL
\qquad
\frac{A,A,\Gamma\seq C}{A,\Gamma\seq C}\ \ctrL
\qquad
\frac{\Gamma\seq}{\Gamma\seq C}\ \wkR
\]

\vspace{0.22cm}

\noindent\emph{Left rules of type $\alpha$}: as in $\Gt$ ($\land\mathrm L$, $\lnot\lnot\mathrm L$, $\lnot\lor\mathrm L$, $\lnot\arr\mathrm L$).

\vspace{0.22cm}

\smallskip
\noindent\emph{Left rules of type $\beta$ (independent contexts).}
\[
\begin{array}{cc}
\displaystyle
\frac{\lnot A,\Gamma\seq C\quad \lnot B,\Delta\seq C}
     {\lnot(A\land B),\Gamma,\Delta\seq C}\ \lnot\land\mathrm L^{s}
&
\displaystyle
\frac{A,\Gamma\seq C\quad B,\Delta\seq C}
     {A\lor B,\Gamma,\Delta\seq C}\ \lor\mathrm L^{s}
\\[2ex]
\displaystyle
\frac{\lnot A,\Gamma\seq C\quad B,\Delta\seq C}
     {A\arr B,\Gamma,\Delta\seq C}\ \arr\mathrm L^{s}
&
\end{array}
\]

\vspace{0.22cm}

\noindent\emph{Right rules} (with independent contexts where two premisses occur).
\[
\frac{\Gamma\seq A\quad\Delta\seq B}{\Gamma,\Delta\seq A\land B}\ \land\mathrm R^{s}
\qquad
\frac{\Gamma\seq\lnot A\quad\Delta\seq\lnot B}{\Gamma,\Delta\seq\lnot(A\lor B)}\ \lnot\lor\mathrm R^{s}
\qquad
\frac{\Gamma\seq A\quad\Delta\seq\lnot B}{\Gamma,\Delta\seq\lnot(A\arr B)}\ \lnot\arr\mathrm R^{s}
\]
together with the one-premiss right rules $\lor\mathrm R_1$, $\lor\mathrm R_2$, $\arr\mathrm R_1$, $\arr\mathrm R_2$, $\lnot\lnot\mathrm R$, $\lnot\land\mathrm R_1$, $\lnot\land\mathrm R_2$ exactly as in $\Gt$.

\vspace{0.22cm}

\smallskip
\noindent\emph{Explosion rule.}
\[
\frac{\Gamma\seq\lnot A\quad\Delta\seq A}{\Gamma,\Delta\seq C}\ \lnot\mathrm{EX}
\]
\end{definition}

The G0-style calculi are the ones on which the correspondence with natural deduction turns, because the independent contexts of their two-premiss rules match the way general elimination rules combine their subderivations, and because the explosion rule $\lnot\mathrm{EX}$ is the exact sequent counterpart of the natural deduction rule of explosion, while the De Morgan right rules are the sequent counterparts of the introduction rules for negated formulae. We shall exploit this in Section~\ref{sec:nd}; for the moment we establish the relation between $\Gz$ and $\Gt$.

\subsection{Equivalence and cut elimination}\label{subsec:equiv}

\begin{proposition}[Derivability of the $\Gt$ rules in $\Gz$]\label{prop:G3inG0}
Each initial sequent and each rule of $\Gt$ is derivable, or admissible, in $\Gz$. In particular the shared-context two-premiss rules, the atomic initial sequents, and the succedent-weakening rule of $\Gt$ are available in $\Gz$.
\end{proposition}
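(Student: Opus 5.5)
We go through the initial sequents and rules of $\Gt$ in turn and show each is derivable in $\Gz$ from the corresponding instances of the primitive $\Gz$ rules together with $\wkL$ and $\ctrL$. No appeal to cut is needed: all derivations are short, local, and built only from primitive rules of $\Gz$.

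\emph{Initial sequents.} $\mathrm{(id)}$ $p,\Gamma\seq p$ is obtained from the generalized identity $p\seq p$ $(\mathrm{id}^{s})$ by repeated $\wkL$, one step for each member of $\Gamma$; $(\mathrm{id}^{\lnot})$ likewise comes from $\lnot p\seq\lnot p$. The closure $\mathrm{(cl)}$ $p,\lnot p,\Gamma\seq$ comes from $(\mathrm{cl}^{s})$ with $A=p$, again by repeated $\wkL$.

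\emph{Rules with identical form.} The succedent weakening $\wkR$ is literally a structural rule of $\Gz$. The one-premiss left rules $\land\mathrm L$, $\lnot\lnot\mathrm L$, $\lnot\lor\mathrm L$, $\lnot\arr\mathrm L$ are rules of $\Gz$ verbatim. So are all one-premiss right rules: $\lor\mathrm R_i$, $\arr\mathrm R_i$, $\lnot\lnot\mathrm R$ and $\lnot\land\mathrm R_i$.

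\emph{Shared-context two-premiss rules.} These are the only step with content. The shared-context rule is recovered by instantiating the independent-context rule with $\Delta:=\Gamma$ and then contracting the duplicated context.
\begin{itemize}
\item For $\lor\mathrm L$: from $A,\Gamma\seq C$ and $B,\Gamma\seq C$, the rule $\lor\mathrm L^{s}$ gives $A\lor B,\Gamma,\Gamma\seq C$. Then $|\Gamma|$ applications of $\ctrL$ give $A\lor B,\Gamma\seq C$.
\item $\lnot\land\mathrm L$ and $\arr\mathrm L$ are handled in exactly the same way, from $\lnot\land\mathrm L^{s}$ and $\arr\mathrm L^{s}$.
\item The right rules $\land\mathrm R$, $\lnot\lor\mathrm R$ and $\lnot\arr\mathrm R$ follow from $\land\mathrm R^{s}$, $\lnot\lor\mathrm R^{s}$ and $\lnot\arr\mathrm R^{s}$ in the same manner, contracting $\Gamma,\Gamma$ to $\Gamma$ in the antecedent. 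The succedent plays no role in these contractions.
\end{itemize}

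Since every rule of $\Gt$ is matched in this way, induction on the height of a $\Gt$-derivation transforms it into a $\Gz$-derivation, which is the content of the statement.

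\emph{Main obstacle.} There is no genuine obstacle. The only point needing care is that $\ctrL$ in $\Gz$ contracts a single formula at a time, so contracting a whole multiset $\Gamma$ requires an iteration of length $|\Gamma|$. This is a finite sequence of primitive rule applications, so the shared-context rules are in fact \emph{derivable}, not merely admissible, in $\Gz$.
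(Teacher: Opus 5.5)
Your proposal is correct and follows the paper's proof essentially step for step. Like the paper, you obtain the atomic initial sequents from $(\mathrm{id}^{s})$ and $(\mathrm{cl}^{s})$ by $\wkL$, note that $\wkR$ and the one-premiss rules are shared verbatim, and recover each shared-context two-premiss rule by instantiating its independent-context counterpart in $\Gz$ with $\Delta:=\Gamma$ and then contracting the duplicated $\Gamma$ by iterated $\ctrL$.
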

\begin{proof}
The atomic initial sequents $\mathrm{(id)}$, $(\mathrm{id}^{\lnot})$ and $\mathrm{(cl)}$ are instances of the generalized $(\mathrm{id}^{s})$ and $(\mathrm{cl}^{s})$ followed, where a nonempty context is required, by $\wkL$. The rule $\wkR$ is primitive in $\Gz$. The one-premiss left rules and the one-premiss right rules are shared verbatim by the two calculi. It remains to derive the shared-context two-premiss rules of $\Gt$ from the independent-context rules of $\Gz$. We show the case of $\lor\mathrm L$; the cases of $\lnot\land\mathrm L$, $\arr\mathrm L$, $\land\mathrm R$, $\lnot\lor\mathrm R$, $\lnot\arr\mathrm R$ are identical in pattern. Given $\Gz$-derivations of $A,\Gamma\seq C$ and $B,\Gamma\seq C$, an application of $\lor\mathrm L^{s}$ yields $A\lor B,\Gamma,\Gamma\seq C$, and repeated applications of $\ctrL$ contract the doubled context $\Gamma$ to a single copy, giving $A\lor B,\Gamma\seq C$, which is the conclusion of $\lor\mathrm L$. Thus every $\Gt$-derivation is transformed, step by step, into a $\Gz$-derivation of the same endsequent.
\end{proof}

\begin{theorem}[Equivalence of $\Gz$ and $\Gt$]\label{thm:equiv}
The calculi $\Gz$ and $\Gt$ are theorem-equivalent: for every sequent $\Gamma\seq C$,
\[
\Gz\vdash\Gamma\seq C\quad\text{if and only if}\quad\Gt\vdash\Gamma\seq C.
\]
\end{theorem}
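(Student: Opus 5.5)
The proof splits into the two directions, each by induction on the height of a given derivation. The right-to-left direction is already settled by Proposition~\ref{prop:G3inG0}. Every initial sequent of $\Gt$ is an instance of $(\mathrm{id}^{s})$ or $(\mathrm{cl}^{s})$ followed by $\wkL$. Every one-premiss rule is shared verbatim. Every shared-context two-premiss rule is simulated by its independent-context version followed by finitely many applications of $\ctrL$. Hence a $\Gt$-derivation of $\Gamma\seq C$ is converted, rule by rule, into a $\Gz$-derivation of the same endsequent, and I will simply cite that proposition.

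For the left-to-right direction, I will argue by induction on the height of a $\Gz$-derivation of $\Gamma\seq C$, showing that each initial sequent and rule of $\Gz$ is derivable or admissible in $\Gt$.
\begin{itemize}
\item The generalized initial sequents $A\seq A$ and $A,\lnot A\seq$ are the cases $\Gamma$ empty of Proposition~\ref{prop:idcl}.
\item $\wkL$ and $\ctrL$ are hp-admissible by Propositions~\ref{prop:wk} and~\ref{prop:ctr}.
\item $\wkR$ and the one-premiss left and right rules are primitive in $\Gt$.
\item $\lnot\mathrm{EX}$ is admissible by Proposition~\ref{prop:ex}.
\end{itemize}

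The remaining cases are the independent-context two-premiss rules. Take $\lor\mathrm L^{s}$ as the model. Given $\Gt$-derivations of $A,\Gamma\seq C$ and $B,\Delta\seq C$, I weaken the first with $\Delta$ and the second with $\Gamma$ by Proposition~\ref{prop:wk}. This gives $A,\Gamma,\Delta\seq C$ and $B,\Gamma,\Delta\seq C$, and the shared-context rule $\lor\mathrm L$ then yields $A\lor B,\Gamma,\Delta\seq C$. The same pattern handles $\lnot\land\mathrm L^{s}$, $\arr\mathrm L^{s}$, $\land\mathrm R^{s}$, $\lnot\lor\mathrm R^{s}$ and $\lnot\arr\mathrm R^{s}$. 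Applying the induction hypothesis to the premisses and then these simulations gives a $\Gt$-derivation of the endsequent.

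The main point to check is that no step is circular. The admissibility of $\lnot\mathrm{EX}$ rests on Theorem~\ref{thm:cutG3}, so I must confirm that cut admissibility in $\Gt$ was proved inside $\Gt$ alone, without any appeal to $\Gz$ or to the equivalence being proved. The proof of Proposition~\ref{prop:ex} records exactly this. Beyond that, each simulation uses only admissible structural rules, whose applications need not preserve height in this argument. The induction is therefore on the $\Gz$-derivation, with admissible (not necessarily height-preserving) rules used freely in building the $\Gt$ side. Finally, since both directions preserve the endsequent exactly, including an empty succedent, the equivalence holds for every sequent $\Gamma\seq C$.
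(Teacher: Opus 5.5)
Your proof is correct and follows the paper's argument essentially step for step. The $\Gt$-to-$\Gz$ direction is Proposition~\ref{prop:G3inG0}, and for the converse you reproduce each $\Gz$ initial sequent and rule in $\Gt$ using Propositions~\ref{prop:idcl}, \ref{prop:wk}, \ref{prop:ctr} and~\ref{prop:ex}, handling the independent-context rules by weakening and then applying the shared-context rule, exactly as the paper does; your check that Theorem~\ref{thm:cutG3} is established inside $\Gt$ alone, so that the appeal to explosion is not circular, is the same point the paper records in the proof of Proposition~\ref{prop:ex}.
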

\begin{proof}
The implication from $\Gt$ to $\Gz$ is Proposition~\ref{prop:G3inG0}. For the converse we show that every initial sequent and every rule of $\Gz$ is derivable or admissible in $\Gt$, whence a $\Gz$-derivation is transformed into a $\Gt$-derivation of the same endsequent by translating it from the leaves downward.

The generalized initial sequents $(\mathrm{id}^{s})$ and $(\mathrm{cl}^{s})$ are derivable in $\Gt$ by Proposition~\ref{prop:idcl}. The structural rules $\wkL$ and $\ctrL$ are hp-admissible in $\Gt$ by Propositions~\ref{prop:wk} and~\ref{prop:ctr}, and $\wkR$ is primitive. The one-premiss rules are shared. The explosion rule $\lnot\mathrm{EX}$ is admissible in $\Gt$ by Proposition~\ref{prop:ex}. It remains to treat the independent-context two-premiss rules. We show $\lor\mathrm L^{s}$; the others are identical in pattern. Given $\Gt$-derivations of $A,\Gamma\seq C$ and $B,\Delta\seq C$, apply $\wkL$ (Proposition~\ref{prop:wk}) to obtain $A,\Gamma,\Delta\seq C$ and $B,\Gamma,\Delta\seq C$, and then the shared-context rule $\lor\mathrm L$ to obtain $A\lor B,\Gamma,\Delta\seq C$, which is the conclusion of $\lor\mathrm L^{s}$. Since each $\Gz$-rule is thus reproduced in $\Gt$, the translation carries a $\Gz$-derivation to a $\Gt$-derivation of the same endsequent, and the equivalence is established.
\end{proof}

The equivalence between the two styles has, as its principal dividend, the cut-elimination theorem for the G0-style calculus. Since cut is admissible in $\Gt$ (Theorem~\ref{thm:cutG3}), it transfers to $\Gz$ through the equivalence.

\begin{theorem}[Cut elimination for $\Gz$]\label{thm:cutG0}
The cut rule is admissible in $\Gz$.
\end{theorem}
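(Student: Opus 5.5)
The plan is to transfer cut admissibility from $\Gt$ to $\Gz$ through the equivalence of Theorem~\ref{thm:equiv}; no new reduction argument is needed. We must show that whenever $\Gz\vdash\Gamma\seq A$ and $\Gz\vdash A,\Delta\seq C$, also $\Gz\vdash\Gamma,\Delta\seq C$. We also want the stronger form: every $\Gz$-derivation that uses cut can be replaced by a cut-free $\Gz$-derivation of the same endsequent.

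The argument goes in three steps. First, apply the direction of Theorem~\ref{thm:equiv} from $\Gz$ to $\Gt$ to the two premisses. This gives $\Gt$-derivations of $\Gamma\seq A$ and of $A,\Delta\seq C$. The translation in the proof of Theorem~\ref{thm:equiv} only invokes Propositions~\ref{prop:idcl}, \ref{prop:wk}, \ref{prop:ctr} and~\ref{prop:ex}. The last of these uses cut in $\Gt$, but that is legitimate, because cut in $\Gt$ is admissible by Theorem~\ref{thm:cutG3}, whose proof is independent of $\Gz$. Second, apply Theorem~\ref{thm:cutG3} to obtain $\Gt\vdash\Gamma,\Delta\seq C$, noting that the cut rule has exactly the same shape (independent contexts $\Gamma,\Delta$) in both calculi. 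Third, apply Proposition~\ref{prop:G3inG0} to carry this derivation back into $\Gz$.

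For the stronger reading, I will observe that the $\Gz$-derivation produced in the third step is cut-free. The reason is that the translation of Proposition~\ref{prop:G3inG0} maps each $\Gt$ rule to $\Gz$ rules plus $\wkL$ and $\ctrL$ only, and $\Gt$ itself contains no cut. Given a $\Gz$-derivation with several cuts, I will eliminate them one at a time, always taking a topmost cut. Each such cut has cut-free premisses, so the three steps above replace it with a cut-free subderivation, and induction on the number of cuts finishes the proof.

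The main obstacle is avoiding circularity in the first step. The translation of $\lnot\mathrm{EX}$ into $\Gt$ depends on cut admissibility in $\Gt$, so I must check that Theorem~\ref{thm:cutG3} does not itself rely on anything proved about $\Gz$. The paper's proof of Proposition~\ref{prop:ex} states this explicitly. I will also note that the result produced this way is admissibility rather than a step-by-step reduction of cuts inside $\Gz$, and that this is all the statement asks for.
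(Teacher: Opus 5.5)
Your proposal is correct and follows the paper's own proof: you move both premisses to $\Gt$ by Theorem~\ref{thm:equiv}, apply Theorem~\ref{thm:cutG3} there, and return to $\Gz$ through the equivalence (Proposition~\ref{prop:G3inG0}). Your additional observations are sound refinements of points the paper leaves implicit: the non-circularity check through Proposition~\ref{prop:ex}, and the fact that eliminating cuts topmost-first yields a genuinely cut-free $\Gz$-derivation.
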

\begin{proof}
Suppose $\Gz\vdash\Gamma\seq A$ and $\Gz\vdash A,\Delta\seq C$. By Theorem~\ref{thm:equiv}, $\Gt\vdash\Gamma\seq A$ and $\Gt\vdash A,\Delta\seq C$. By Theorem~\ref{thm:cutG3}, cut is admissible in $\Gt$, so $\Gt\vdash\Gamma,\Delta\seq C$. By Theorem~\ref{thm:equiv} once more, $\Gz\vdash\Gamma,\Delta\seq C$, which was to be shown.
\end{proof}

The route just followed is the one that Kamide and Negri take for their G0-style calculi \cite[Thm.~2.13]{KamideNegri2025}: rather than eliminate cut directly in the G0 calculus, where the independent contexts and the explicit structural rules multiply the cases, one transfers the derivation to the structural-rule-free G3 calculus, eliminates the cut there, and transfers back. The equivalence theorem is thus not an end in itself but the instrument by which the cut-elimination theorem for $\Gz$ is obtained.

\begin{corollary}\label{cor:noemptyG0}
No sequent with empty antecedent is derivable in $\Gz$.
\end{corollary}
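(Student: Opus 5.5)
The corollary follows at once from Theorem~\ref{thm:equiv} together with Proposition~\ref{prop:noempty}, and this is the route I would take. Suppose, for contradiction, that $\Gz\vdash\seq C$ for some $C$, where $C$ is a formula or empty. By the direction of Theorem~\ref{thm:equiv} that carries $\Gz$ into $\Gt$, we obtain $\Gt\vdash\seq C$. This contradicts Proposition~\ref{prop:noempty}, which excludes both $\seq C$ for every formula $C$ and the empty sequent $\seq$.

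The only point needing care is that the equivalence theorem is used exactly in the form in which it is stated. The translation from $\Gz$ to $\Gt$ preserves the endsequent verbatim, and in particular it preserves the empty antecedent. This is what the proof of Theorem~\ref{thm:equiv} provides, since it reproduces each $\Gz$ rule in $\Gt$ with the same conclusion.

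As a cross-check, independent of the equivalence, I would also note a direct induction on $\Gz$-derivations. The claim is that every derivable sequent has a non-empty antecedent.
\begin{itemize}
\item The initial sequents $(\mathrm{id}^{s})$ and $(\mathrm{cl}^{s})$ display $A$, respectively $A,\lnot A$, in the antecedent.
\item The left rules and $\wkL$ place a formula in the antecedent of their conclusion.
\item $\ctrL$ leaves at least one copy of the contracted formula.
\item The one-premiss right rules and $\wkR$ keep the antecedent unchanged.
\item The two-premiss right rules and $\lnot\mathrm{EX}$ have conclusion antecedent $\Gamma,\Delta$, which is non-empty because $\Gamma$ already is, by the induction hypothesis.
\end{itemize}
There is no real obstacle in either argument. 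The only step to check is that the rules with independent contexts, including $\lnot\mathrm{EX}$, cannot produce an empty antecedent, and the induction hypothesis on either premiss settles this.
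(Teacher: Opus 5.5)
Your main argument is exactly the paper's: the corollary is read off from Theorem~\ref{thm:equiv} together with Proposition~\ref{prop:noempty}. Your cross-check by direct induction on $\Gz$-derivations is also correct. It has the small advantage of not resting on the equivalence theorem, and hence not on the cut-admissibility argument of Theorem~\ref{thm:cutG3}, which that theorem needs in order to handle $\lnot\mathrm{EX}$.
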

\begin{proof}
Immediate from Theorem~\ref{thm:equiv} and Proposition~\ref{prop:noempty}.
\end{proof}

\subsection{Recovering classical logic on the succedent side}\label{subsec:raa}

By Proposition~\ref{prop:noempty} the calculi of this section leave the succedent side without theorems, and the reason is structural: no rule passes a formula from the succedent to the antecedent, so a derivation starting from an empty antecedent can never reach an initial sequent. The classical rule of indirect proof does exactly that, and adjoining it repairs the deficiency in one step. Write $\Gt^{c}$ for the calculus obtained from $\Gt$ by adding
\[
\frac{\lnot A,\Gamma\seq}{\Gamma\seq A}\ \mathrm{RAA}
\]
The rule is not admissible in $\Gt$: were it admissible, $\seq p\lor\lnot p$ would be derivable, against Proposition~\ref{prop:noempty}. Its addition leaves the refutational fragment untouched, since its conclusion is never a sequent with empty succedent, so that Proposition~\ref{prop:blockfragment} holds of $\Gt^{c}$ as it stands.

\begin{proposition}[Adequacy of $\Gt^{c}$]\label{prop:raa}
For every finite multiset $\Gamma$ and every formula $A$, the sequent $\Gamma\seq A$ is derivable in $\Gt^{c}$ if and only if $A$ is a classical consequence of $\Gamma$.
\end{proposition}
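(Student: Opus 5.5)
We prove the two directions separately: soundness by induction on derivations, completeness by routing through the refutational fragment.

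\emph{Soundness.} We show, by induction on the height of a $\Gt^{c}$-derivation, that every derivable sequent is classically valid. Here $\Gamma\seq C$ with $C$ a formula is valid when every valuation satisfying all of $\Gamma$ satisfies $C$, and $\Gamma\seq$ is valid when $\Gamma$ is unsatisfiable. The initial sequents $\mathrm{(id)}$, $(\mathrm{id}^{\lnot})$ and $\mathrm{(cl)}$ are valid. $\wkR$ preserves validity, since an unsatisfiable antecedent validates any succedent. Each left rule preserves validity because its principal formula is classically equivalent to the conjunction of its active components ($\alpha$ rules) or to their disjunction ($\beta$ rules), given the material reading of $\arr$ and the De Morgan laws. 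Each right rule is sound by the corresponding classical tautology, for instance $\lnot A\models\lnot(A\land B)$ and $A,\lnot B\models\lnot(A\arr B)$. Finally, $\mathrm{RAA}$ is sound: if $\lnot A,\Gamma$ is unsatisfiable, then every valuation satisfying $\Gamma$ satisfies $A$.

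\emph{Completeness.} Suppose $A$ is a classical consequence of $\Gamma$. Then the set $\Pi=\Gamma\cup\{\lnot A\}$, the block associated with the sequent $\Gamma\seq A$, is unsatisfiable. By the soundness and completeness of the block calculus recalled in Subsection~\ref{subsec:block}, $\Pi$ admits a closed tableau. By Proposition~\ref{prop:blockfragment}, $\Pi\seq$ is derivable in $\Gt$, hence in $\Gt^{c}$, and one application of $\mathrm{RAA}$ yields $\Gamma\seq A$.

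\emph{Main obstacle: sets versus multisets.} Proposition~\ref{prop:blockfragment} is stated for a \emph{set} $\Pi$, while $\Gamma$ is a multiset. In addition, $\lnot A$ may already occur in $\Gamma$, so the multiset $\lnot A,\Gamma$ can carry repetitions that the block $\Pi$ does not record. We bridge this gap with Proposition~\ref{prop:wk}: from the derivation of the set-form sequent $\Pi\seq$, hp-admissible left weakening restores every duplicated occurrence and gives $\lnot A,\Gamma\seq$ in $\Gt$. This step is legitimate because the tableau reading of a set applies to any multiset whose underlying set is $\Pi$.

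We also check that the correspondence of Proposition~\ref{prop:blockfragment} is invoked only in its block-to-sequent direction. The soundness argument is carried out independently of it, so it does not rely on the converse direction, whose proof appeals to the generalized closure of Proposition~\ref{prop:idcl}.
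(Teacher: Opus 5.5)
Your proposal is correct and follows the paper's proof essentially step for step. Soundness is checked rule by rule, reading $\Gamma\seq C$ as consequence and $\Gamma\seq$ as unsatisfiability; completeness goes through the adequacy of the block calculus, Proposition~\ref{prop:blockfragment}, left weakening (Proposition~\ref{prop:wk}) to restore multiplicities, and one application of $\mathrm{RAA}$. One small slip in your closing remark: it is the block-to-sequent direction of Proposition~\ref{prop:blockfragment} (closed leaves with compound complementary pairs) that appeals to Proposition~\ref{prop:idcl}, not the converse, but this is harmless because Proposition~\ref{prop:idcl} does not depend on $\mathrm{RAA}$.
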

\begin{proof}
Read $\Gamma\seq C$ as $\bigwedge\Gamma\models C$ and $\Gamma\seq$ as the unsatisfiability of $\Gamma$. Soundness is then checked rule by rule: the initial sequents and the left rules are sound in the usual way; each right rule introduces a compound, or a negated compound, whose classical truth condition follows from those of its premisses; $\wkR$ is sound because an unsatisfiable antecedent entails everything; and $\mathrm{RAA}$ is sound because the unsatisfiability of $\Gamma\cup\{\lnot A\}$ gives $\bigwedge\Gamma\models A$. For completeness, suppose $\bigwedge\Gamma\models A$. Then $\Gamma\cup\{\lnot A\}$ is classically unsatisfiable, so by the adequacy of the block calculus \cite{Cuconato2025IM} it admits a closed tableau, and by Proposition~\ref{prop:blockfragment} the corresponding sequent with empty succedent is derivable in $\Gt$; the multiplicities that the passage from a set to a multiset requires are restored by Proposition~\ref{prop:wk}, giving $\lnot A,\Gamma\seq$. One application of $\mathrm{RAA}$ yields $\Gamma\seq A$.
\end{proof}

In particular $\seq p\lor\lnot p$ is derivable in $\Gt^{c}$: the block $\{\lnot(p\lor\lnot p)\}$ closes, by one application of $\lnot\lor$ followed by one of $\lnot\lnot$, so that $\lnot(p\lor\lnot p)\seq$ is derivable in $\Gt$, and $\mathrm{RAA}$ concludes. The law of excluded middle is thus derived, and not assumed, exactly as one would wish; but it is derived in $\Gt^{c}$, and not in $\Gt$.

\begin{remark}\label{rem:raa}
Two comments on the extension. First, cut is admissible in $\Gt^{c}$, and for a reason that costs nothing: the premisses of a cut are classically valid consequences, hence so is its conclusion, which is therefore derivable --- by Proposition~\ref{prop:raa} when the conclusion has a formula in the succedent, and by the adequacy of the block calculus together with Proposition~\ref{prop:blockfragment} when it has none. The syntactic argument of Theorem~\ref{thm:cutG3} concerns $\Gt$ and is unaffected by the extension. Second, the natural deduction counterpart of $\mathrm{RAA}$ is the classical reductio, which discharges $\lnot A$ in a derivation of $\bot$; adjoining it to the system $\Ng$ of the next section yields a natural deduction system for classical logic, but normalization for general elimination rules in the presence of a classical rule calls for the apparatus of von Plato and Siders \cite{vonPlatoSiders2012}, and we do not pursue it here. The results of Section~\ref{sec:nd} concern $\Ng$ as defined in Definition~\ref{def:NgT}.
\end{remark}

\section{Natural deduction with general elimination rules and full normalization}\label{sec:nd}

We now introduce a natural deduction system for the logic of the block calculus, and we prove for it a full normalization theorem. The system is of the kind studied by von Plato and by Negri and von Plato \cite{vonPlato2001,NegriVonPlato2001,NegriVonPlatoJSL2001}, and whose general elimination rules trace back to Schroeder-Heister \cite{SchroederHeister1984}: its elimination rules are \emph{general}, in the sense that each of them, like the familiar rule of disjunction elimination, has a major premiss, a number of minor derivations in which assumptions are discharged, and a conclusion of arbitrary form. The point of the general format is that it makes the natural deduction rules match, one for one, the left rules of the G0-style sequent calculus, so that the two systems can be translated into each other; and it is by such a translation, run in both directions and passing through the cut-elimination theorem of the previous section, that we obtain the normalization theorem.

\subsection{The system $\Ng$}\label{subsec:NgT}

We adopt the standard conventions of natural deduction. A \emph{derivation} is a finite tree of formulae grown by the rules below; its topmost formulae are \emph{assumptions}, each either open or discharged by an application of a rule further down. An expression $[A]$ marks an assumption available for discharge; when a rule discharges it, we write the discharge above the inference. For a derivation $\mathcal D$ we write $\oa(\mathcal D)$ for the multiset of its open assumptions and $\ero(\mathcal D)$ for its end-formula. A formula $A$ is \emph{provable} from $\Gamma$ if there is a derivation with $\oa(\mathcal D)=\Gamma$ and $\ero(\mathcal D)=A$.

\begin{definition}[The system $\Ng$]\label{def:NgT}
The system $\Ng$ consists of the following introduction and elimination rules. As before, the implication is read materially, so that $A\arr B$ and $\lnot(A\arr B)$ are governed exactly as $\lnot A\lor B$ and $A\land\lnot B$ respectively.

\smallskip
\noindent\emph{Introduction rules.}
\[
\begin{array}{ccc}
\displaystyle
\frac{A\quad B}{A\land B}\ \land\mathrm I
&
\displaystyle
\frac{A}{A\lor B}\ \lor\mathrm I_1
&
\displaystyle
\frac{B}{A\lor B}\ \lor\mathrm I_2
\end{array}
\]

\[
\begin{array}{ccc}
\displaystyle
\frac{\lnot A}{A\arr B}\ \arr\mathrm I_1
&
\displaystyle
\frac{B}{A\arr B}\ \arr\mathrm I_2
&
\displaystyle
\frac{A}{\lnot\lnot A}\ \lnot\lnot\mathrm I
\end{array}
\]

\[
\begin{array}{cccc}
\displaystyle
\frac{\lnot A}{\lnot(A\land B)}\ \lnot\land\mathrm I_1
&
\displaystyle
\frac{\lnot B}{\lnot(A\land B)}\ \lnot\land\mathrm I_2
&
\displaystyle
\frac{\lnot A\quad\lnot B}{\lnot(A\lor B)}\ \lnot\lor\mathrm I
&
\displaystyle
\frac{A\quad\lnot B}{\lnot(A\arr B)}\ \lnot\arr\mathrm I
\end{array}
\]

\vspace{0.22cm}

\noindent\emph{General elimination rules.}
\[
\frac{A\land B\quad \begin{array}{c}[A,B]\\ \vdots\\ C\end{array}}{C}\ \land\mathrm E
\qquad
\frac{A\lor B\quad \begin{array}{c}[A]\\ \vdots\\ C\end{array}\quad \begin{array}{c}[B]\\ \vdots\\ C\end{array}}{C}\ \lor\mathrm E
\qquad
\frac{A\arr B\quad \begin{array}{c}[\lnot A]\\ \vdots\\ C\end{array}\quad \begin{array}{c}[B]\\ \vdots\\ C\end{array}}{C}\ \arr\mathrm E
\]
\[
\frac{\lnot\lnot A\quad \begin{array}{c}[A]\\ \vdots\\ C\end{array}}{C}\ \lnot\lnot\mathrm E
\qquad
\frac{\lnot(A\land B)\quad \begin{array}{c}[\lnot A]\\ \vdots\\ C\end{array}\quad \begin{array}{c}[\lnot B]\\ \vdots\\ C\end{array}}{C}\ \lnot\land\mathrm E
\]
\[
\frac{\lnot(A\lor B)\quad \begin{array}{c}[\lnot A,\lnot B]\\ \vdots\\ C\end{array}}{C}\ \lnot\lor\mathrm E
\qquad
\frac{\lnot(A\arr B)\quad \begin{array}{c}[A,\lnot B]\\ \vdots\\ C\end{array}}{C}\ \lnot\arr\mathrm E
\]

\noindent\emph{Explosion.}
\[
\frac{\lnot A\quad A}{C}\ \mathrm{Exp}
\]
\end{definition}

The rules $\land\mathrm E$, $\lor\mathrm E$, $\arr\mathrm E$, $\lnot\lnot\mathrm E$, $\lnot\land\mathrm E$, $\lnot\lor\mathrm E$, $\lnot\arr\mathrm E$ are the general elimination rules; in each, the leftmost premiss is the \emph{major} premiss, and the derivations from bracketed assumptions are the \emph{minor} derivations. In $\mathrm{Exp}$ both premisses $\lnot A$ and $A$ count as major, in accordance with the treatment of explosion in \cite[Rem.~3.5]{KamideNegri2025}. It is characteristic of the general format that the elimination of a connective does not project onto a distinguished component --- as the special elimination $\frac{A\land B}{A}$ would --- but discharges the components as assumptions of a minor derivation towards an arbitrary conclusion $C$; the special eliminations are recovered as instances in which $C$ is the component and the minor derivation is the assumption itself. The introduction rules and the explosion rule are exactly the natural deduction reading of the right rules and of the closure of the sequent calculus of Section~\ref{sec:sequents}: to each right rule corresponds an introduction, and to the closure axiom, read as the derivation of a contradiction from a complementary pair, corresponds $\mathrm{Exp}$.

Following von Plato and Negri, we single out the derivations in which no elimination is applied to a compound formula that has just been introduced.

\begin{definition}[Full normal form]\label{def:fnf}
A derivation in $\Ng$ is in \emph{full normal form}, or is \emph{fully normal}, if every major premiss of an elimination rule --- including both premisses of every application of $\mathrm{Exp}$ --- is an assumption, open or discharged.
\end{definition}

The notion of full normal form for natural deduction with general elimination rules is due to von Plato \cite{vonPlato2001,NegriVonPlato2001}; the demand that major premisses be assumptions is the general-elimination counterpart of the demand, in Prawitz's normalization \cite{Prawitz1965}, that no major premiss of an elimination be the conclusion of an introduction. As in \cite{KamideNegri2025}, we use the symbol $\bot$ as an abbreviation of the formula $\lnot p\land p$ for a fixed propositional variable $p$, and we interpret a derivation of a sequent $\Gamma\seq$ with empty succedent as a derivation $\mathcal D$ in $\Ng$ with $\oa(\mathcal D)=\Gamma$ and $\ero(\mathcal D)=\bot$. A \emph{multiset reduct} of a multiset $\Gamma$ is a multiset obtained from $\Gamma$ by lowering some of its multiplicities, the operation that corresponds, on the side of natural deduction, to the multiple discharge of assumptions \cite[Def.~3.7]{KamideNegri2025}.

We record a fact that will be used to translate the succedent-weakening rule.

\begin{proposition}[Derivability of the falsum rule]\label{prop:falsum}
The rule $\dfrac{\bot}{C}\ \bot\mathrm E$ is derivable in $\Ng$, and the derivation obtained is fully normal whenever its premiss is the conclusion of a fully normal derivation whose eliminations have assumptions as major premisses.
\end{proposition}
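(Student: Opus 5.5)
We derive $\bot\mathrm E$ by unfolding the abbreviation $\bot=\lnot p\land p$ and using the general conjunction elimination with an explosion inside its minor derivation. Given a derivation $\mathcal D$ of $\bot$, that is of $\lnot p\land p$, we apply $\land\mathrm E$ with $\mathcal D$ as the major premiss. The minor derivation discharges the assumptions $\lnot p$ and $p$ and concludes $C$ by one application of $\mathrm{Exp}$ to these two assumptions:
\[
\frac{\lnot p\land p\quad \begin{array}{c}\lnot p\quad p\\ \hline C\end{array}}{C}\ \land\mathrm E .
\]
The open assumptions of the resulting derivation are exactly those of $\mathcal D$, since the two new assumptions $\lnot p,p$ are discharged by $\land\mathrm E$. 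The end-formula is $C$, so the rule $\bot\mathrm E$ is derivable.

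For the second claim we inspect the major premisses of the new derivation. Both premisses of the new $\mathrm{Exp}$ are discharged assumptions, so that inference respects Definition~\ref{def:fnf}. The delicate point is the major premiss of the new $\land\mathrm E$, which is the end-formula of $\mathcal D$. It is an assumption only when $\mathcal D$ is itself the one-node derivation consisting of the assumption $\bot$. I therefore read the hypothesis of the statement in one of two ways.

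\textbf{Trivial case.} If $\mathcal D$ is the assumption $\bot$ (the relevant case for translating $\wkR$, where $\bot$ is discharged or open), the composite is fully normal outright.

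\textbf{General case.} If $\mathcal D$ ends with an elimination rule, every elimination in $\mathcal D$ has assumptions as major premisses. We then permute the new $\land\mathrm E$ upward into the minor derivations of the last elimination of $\mathcal D$, by the standard permutative conversion of general elimination rules, and repeat. Each step moves the $\land\mathrm E$ to minor derivations of strictly smaller height, so the process terminates. Each minor derivation ends in $\bot$, and this $\bot$ is either an assumption, in which case we are done, or the conclusion of a further elimination, to which we permute again. If the last rule is $\mathrm{Exp}$, the $\mathrm{Exp}$ can instead conclude $C$ directly, and the $\land\mathrm E$ is dropped.

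The remaining case, which I expect to be the main obstacle, is a derivation of $\bot$ ending in $\land\mathrm I$ from derivations of $\lnot p$ and of $p$. Here the new $\land\mathrm E$ has a major premiss that is neither an assumption nor removable by permutation. We apply the detour conversion: we substitute the derivations of $\lnot p$ and $p$ for the discharged assumptions, so that $\mathrm{Exp}$ receives them directly. If those derivations are not themselves assumptions, the $\mathrm{Exp}$ is not fully normal. So I would restrict the claim to $\mathcal D$ whose end is an assumption or an elimination, which is what ``whose eliminations have assumptions as major premisses'' is meant to secure. I would state explicitly that this is the case needed for the translation of $\wkR$ in the next subsection.
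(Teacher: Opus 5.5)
Your derivation of $\bot\mathrm E$ is the paper's: $\land\mathrm E$ on $\lnot p\land p$, with minor derivation $\mathrm{Exp}$ on the discharged assumptions $\lnot p$ and $p$. On the full-normality clause you have found a genuine defect, and it lies in the paper, not in your reading. The paper checks full normality only for the stand-alone derivation in which $\lnot p\land p$ is an assumption, which is your trivial case. It then simply asserts that grafting this onto any fully normal derivation of $\bot$ preserves full normality. Read literally, the clause is false. Let $\mathcal D$ be $\mathrm{Exp}$ applied to the assumptions $\lnot q$ and $q$ with conclusion $\bot$. After grafting, the major premiss of the new $\land\mathrm E$ is the conclusion of $\mathrm{Exp}$, which violates Definition~\ref{def:fnf}. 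The only true version is that \emph{some} fully normal derivation of $C$ exists whose open assumptions are among those of $\mathcal D$. A permutation argument like yours is the right tool for proving that.

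Your repair still has two gaps. First, the trivial case is not the one that matters for $\wkR$. In Theorem~\ref{thm:sctond} the derivation of $\bot$ that receives $\bot\mathrm E$ translates a cut-free derivation of $\Gamma\seq$. It therefore always ends in $\mathrm{Exp}$ (from $(\mathrm{cl}^{s})$ or $\lnot\mathrm{EX}$) or in an elimination, never in a bare assumption. The counterexample above is exactly $q,\lnot q\seq$ followed by $\wkR$. Second, restricting to $\mathcal D$ that end in an assumption or an elimination does not exclude the case you flagged. After a permutation, a minor derivation of that elimination may itself end in $\land\mathrm I$.

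The missing observation is that this case is harmless. After the detour conversion, $\mathrm{Exp}$ receives fully normal derivations of $\lnot p$ and of $p$. No introduction rule of $\Ng$ concludes a propositional variable or a negated variable, so each of these derivations ends in one of three ways:
\begin{itemize}
\item in an assumption, which is what full normality requires;
\item in an elimination, into whose minor derivations $\mathrm{Exp}$ can be permuted exactly as you permute $\land\mathrm E$, copying the other premiss;
\item in an $\mathrm{Exp}$ on assumptions, which can be retargeted to $C$, discarding the other premiss.
\end{itemize}
Induction on the sum of the heights of the two premiss derivations terminates. So the corrected clause holds for every fully normal $\mathcal D$, with no restriction.

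For the application in Theorem~\ref{thm:sctond} even less is needed. There $\bot$ is reached only through $\mathrm{Exp}$ and through eliminations whose minor derivations again end in $\bot$ in the same way. One can therefore give all these inferences the conclusion $C$ directly and dispense with $\bot\mathrm E$ altogether.
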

\begin{proof}
Let $\bot=\lnot p\land p$. From the major premiss $\lnot p\land p$, taken as an assumption, one application of $\land\mathrm E$ discharging $[\lnot p, p]$ towards the conclusion $C$, whose minor derivation is the single application of $\mathrm{Exp}$ to the two assumptions $\lnot p$ and $p$, yields $C$:
\[
\frac{\lnot p\land p\quad \begin{array}{c}[\lnot p,\ p]\\[2pt] \dfrac{\lnot p\quad p}{C}\ \mathrm{Exp}\end{array}}{C}\ \land\mathrm E .
\]
The major premiss of $\land\mathrm E$ is the assumption $\lnot p\land p$, and both premisses of $\mathrm{Exp}$ are the assumptions $\lnot p$ and $p$, so the derivation is fully normal; grafting it onto a fully normal derivation of $\bot$ whose own eliminations have assumptions as major premisses preserves full normality.
\end{proof}

\subsection{From natural deduction to sequent calculus}\label{subsec:ndtosc}

We first show that a derivation of $\Ng$ can be simulated in $\Gz$. The translation follows the last rule of the derivation; we display, for each rule, the first step in sequent notation and continue the translation on the premisses. When the derivation is fully normal, the simulation uses no cut; a non-normal derivation is simulated with cuts, which the cut-elimination theorem of Section~\ref{sec:sequents} then removes.

\begin{theorem}[Simulation of $\Ng$ in $\Gz$]\label{thm:ndtosc}
If $\mathcal D$ is a derivation in $\Ng$ with $\oa(\mathcal D)=\Gamma$ and $\ero(\mathcal D)=C$, then $\Gz\vdash\Gamma\seq C$. If $\mathcal D$ is fully normal, the simulating derivation uses no cut.
\end{theorem}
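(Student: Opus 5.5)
The proof goes by induction on the construction of $\mathcal D$, that is, on the number of rule applications in it, with a case analysis on its last rule. Derivations in $\Ng$ have a single conclusion, so every translated sequent has a formula succedent $C$; the empty-succedent fragment of $\Gz$ is reached only through the convention that $\bot$ stands for $\lnot p\land p$. To keep the open assumptions exactly equal to $\Gamma$, I will use the following convention. Vacuous discharges are compensated by $\wkL$. Multiple discharges, which produce multiset reducts, are compensated by $\ctrL$. Both rules are primitive in $\Gz$, so neither introduces a cut.

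\emph{Base case.} If $\mathcal D$ is a single assumption $A$, its translation is the initial sequent $A\seq A$ $(\mathrm{id}^{s})$.

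\emph{Introduction rules.} Each introduction is translated by the matching right rule of $\Gz$. The two-premiss introductions $\land\mathrm I$, $\lnot\lor\mathrm I$, $\lnot\arr\mathrm I$ combine their subderivations with open assumptions $\Gamma_1,\Gamma_2$, and this is exactly the independent-context format of $\land\mathrm R^{s}$, $\lnot\lor\mathrm R^{s}$, $\lnot\arr\mathrm R^{s}$. The one-premiss introductions go to $\lor\mathrm R_i$, $\arr\mathrm R_i$, $\lnot\lnot\mathrm R$, $\lnot\land\mathrm R_i$.

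\emph{Explosion.} $\mathrm{Exp}$ with premisses $\lnot A$ and $A$ is translated by $\lnot\mathrm{EX}$ applied to the two translated subderivations. No cut appears here, even when the premisses are not assumptions.

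\emph{General eliminations.} Consider, for instance, $\lor\mathrm E$ with major premiss $A\lor B$ derived by $\mathcal D_0$ from $\Gamma_0$, and minor derivations $\mathcal D_1,\mathcal D_2$ of $C$ from $A^{m},\Gamma_1$ and $B^{n},\Gamma_2$. The induction hypothesis gives $\Gz\vdash A^{m},\Gamma_1\seq C$ and $\Gz\vdash B^{n},\Gamma_2\seq C$. I adjust these to exactly one copy of $A$ and of $B$, by $\ctrL$ when $m>1$ and by $\wkL$ when $m=0$. Then $\lor\mathrm L^{s}$ yields $A\lor B,\Gamma_1,\Gamma_2\seq C$.

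If the major premiss is an assumption, it stays as the open assumption $A\lor B$, and the result is already the desired sequent. Otherwise I cut with the translation $\Gamma_0\seq A\lor B$ of $\mathcal D_0$ to obtain $\Gamma_0,\Gamma_1,\Gamma_2\seq C$. The same pattern handles $\land\mathrm E$ via $\land\mathrm L$, $\arr\mathrm E$ via $\arr\mathrm L^{s}$, $\lnot\lnot\mathrm E$, $\lnot\land\mathrm E$, $\lnot\lor\mathrm E$ and $\lnot\arr\mathrm E$ via their left rules.

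The first claim then follows by Theorem~\ref{thm:cutG0}, which says cut is admissible in $\Gz$, so the cuts introduced above can be removed. The second claim follows because in a fully normal derivation every major premiss is an assumption, so the cut clause is never triggered. The other clauses, including $\mathrm{Exp}$, use only rules of $\Gz$, so the simulating derivation is cut-free.

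The main obstacle I expect is the bookkeeping around discharge. First, an assumption $A$ discharged by an elimination may occur in a minor derivation with any multiplicity, including zero, and this must be reconciled exactly with the single active formula of the left rule. Second, when the major premiss is an assumption that is itself discharged lower down, its occurrence in the antecedent must be tracked so that the outer inductive step can treat it as a discharged formula. The $\wkL$/$\ctrL$ normalisation of multiplicities at each step handles both points. Care is needed only so that these adjustments are applied before, and never in place of, the logical rule.
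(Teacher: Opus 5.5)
Your proposal is correct and follows the paper's proof. Both argue by induction on $\mathcal D$, translate introductions to right rules and $\mathrm{Exp}$ to $\lnot\mathrm{EX}$, and translate general eliminations to left rules followed by a cut on the major premiss; that cut is trivial when the major premiss is an assumption, and Theorem~\ref{thm:cutG0} removes the rest. Your explicit $\wkL$/$\ctrL$ adjustment for vacuous and multiple discharges is bookkeeping the paper leaves implicit, and it is what makes $\oa(\mathcal D)=\Gamma$ hold exactly.
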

\begin{proof}
The proof is by induction on the height of $\mathcal D$, with cases on its last rule. An assumption $A$ with $\oa=\{A\}$ and $\ero=A$ translates to the initial sequent $A\seq A$.

\emph{Introduction rules.} These translate to the corresponding right rules. If the last rule is $\land\mathrm I$, with premisses derivations of $A$ from $\Gamma$ and of $B$ from $\Delta$, the induction hypothesis gives $\Gamma\seq A$ and $\Delta\seq B$, and $\land\mathrm R^{s}$ yields $\Gamma,\Delta\seq A\land B$. The rules $\lor\mathrm I_i$, $\arr\mathrm I_i$, $\lnot\lnot\mathrm I$, $\lnot\land\mathrm I_i$, $\lnot\lor\mathrm I$, $\lnot\arr\mathrm I$ translate to $\lor\mathrm R_i$, $\arr\mathrm R_i$, $\lnot\lnot\mathrm R$, $\lnot\land\mathrm R_i$, $\lnot\lor\mathrm R^{s}$, $\lnot\arr\mathrm R^{s}$ in the same manner, the two-premiss introductions using the independent-context right rules.

\emph{Explosion.} If the last rule is $\mathrm{Exp}$, with premisses derivations of $\lnot A$ from $\Gamma$ and of $A$ from $\Delta$, the induction hypothesis gives $\Gamma\seq\lnot A$ and $\Delta\seq A$, and $\lnot\mathrm{EX}$ yields $\Gamma,\Delta\seq C$.

\emph{General elimination rules.} These translate to the corresponding left rules, with a cut on the major premiss. Consider $\land\mathrm E$, with major premiss a derivation of $A\land B$ from $\Gamma$ and minor derivation of $C$ from $[A,B]$ and $\Delta$. The induction hypothesis gives $\Gamma\seq A\land B$ and $A,B,\Delta\seq C$; the latter, by $\land\mathrm L$, gives $A\land B,\Delta\seq C$; and a cut on $A\land B$ against $\Gamma\seq A\land B$ gives $\Gamma,\Delta\seq C$. The rules $\lor\mathrm E$, $\arr\mathrm E$, $\lnot\lnot\mathrm E$, $\lnot\land\mathrm E$, $\lnot\lor\mathrm E$, $\lnot\arr\mathrm E$ are treated in the same way, translating to $\lor\mathrm L^{s}$, $\arr\mathrm L^{s}$, $\lnot\lnot\mathrm L$, $\lnot\land\mathrm L^{s}$, $\lnot\lor\mathrm L$, $\lnot\arr\mathrm L$ respectively, each followed by a cut on the major premiss.

If $\mathcal D$ is fully normal, then the major premiss of every elimination is an assumption, so in each elimination case the derivation of the major premiss supplied by the induction hypothesis is an initial sequent $D\seq D$ with $D$ the principal formula; the cut against $D\seq D$ is trivial and may be omitted, the left rule being applied directly to the antecedent occurrence of $D$. Hence a fully normal $\mathcal D$ is simulated without cut. Since cut is admissible in $\Gz$ by Theorem~\ref{thm:cutG0}, in every case $\Gz\vdash\Gamma\seq C$.
\end{proof}

\subsection{From sequent calculus to natural deduction}\label{subsec:sctond}

The converse translation carries a cut-free derivation of $\Gz$ to a fully normal derivation of $\Ng$. It is here that the general format of the elimination rules earns its keep: a left rule, whose principal formula sits in the antecedent and is therefore an open assumption on the natural deduction side, translates to an elimination whose major premiss is exactly that assumption, so that the resulting derivation is fully normal by construction.

\begin{theorem}[Simulation of $\Gz$ in $\Ng$]\label{thm:sctond}
If $\Gz\vdash\Gamma\seq C$, then there is a fully normal derivation $\mathcal D$ in $\Ng$ with $\oa(\mathcal D)=\Gamma^{*}$, where $\Gamma^{*}$ is a multiset reduct of $\Gamma$, and $\ero(\mathcal D)=C$; a sequent $\Gamma\seq$ with empty succedent is translated with $\ero(\mathcal D)=\bot$.
\end{theorem}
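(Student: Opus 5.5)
We first apply Theorem~\ref{thm:cutG0}: cut is admissible in $\Gz$, and $\Gz$ as defined contains no cut rule, so every derivation of $\Gamma\seq C$ is already cut-free. We then proceed by induction on the height of that derivation, with cases on its last rule. In the induction hypothesis we carry a uniform reading of the succedent: a sequent $\Gamma\seq C$ with $C$ a formula goes to a fully normal $\mathcal D$ with $\ero(\mathcal D)=C$, and a sequent $\Gamma\seq$ with empty succedent goes to one with $\ero(\mathcal D)=\bot$. In each case the open assumptions form a multiset reduct of $\Gamma$. Reducts are unavoidable, because weakening and contraction have no literal counterpart on the natural deduction side.

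\emph{Initial sequents.} The sequent $A\seq A$ becomes the single assumption $A$. For $A,\lnot A\seq$ we take $\mathrm{Exp}$ applied to the assumptions $\lnot A$ and $A$, with conclusion $\bot$. Both premisses are assumptions, so the derivation is fully normal.

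\emph{Structural rules.} For $\wkL$ we keep the same derivation; the new $\Gamma^{*}$ is still a reduct of the larger antecedent. For $\ctrL$ we keep the same derivation as well. Two occurrences of $A$ in a reduct either remain open or are identified as one assumption, and the result is a reduct of $A,\Gamma$. For $\wkR$ we have a derivation of $\bot$ and append the derivation of Proposition~\ref{prop:falsum}. The intended move is to let the $\land\mathrm E$ there take the already derived $\bot$ as its major premiss, but then that premiss is not an assumption. The way around this is to push the $\bot\mathrm E$ upward. We inspect how $\bot$ was obtained: it comes either from $\mathrm{Exp}$, in which case we change that $\mathrm{Exp}$'s conclusion to $C$ directly, or from the minor derivations of eliminations, which have arbitrary conclusions, in which case we recurse into those minors. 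Proving this "replace the end formula $\bot$ by $C$" lemma is where I expect the main obstacle. The lemma needs a strengthened induction hypothesis: every derivation with end formula $\bot$ produced by the translation ends either in $\mathrm{Exp}$ or in an elimination whose minors end in $\bot$, and never in an introduction. This holds because no right rule yields an empty succedent.

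\emph{Logical rules.} A right rule goes to the corresponding introduction. For two-premiss right rules, the independent contexts $\Gamma,\Delta$ give the union of the two reducts. $\lnot\mathrm{EX}$ goes to $\mathrm{Exp}$, but $\mathrm{Exp}$ demands assumptions as premisses, which again collides with full normality. I would handle this by first observing that in $\Gt$, and therefore in cut-free $\Gz$ via Theorem~\ref{thm:equiv}, the rule $\lnot\mathrm{EX}$ can be eliminated through the cut-based derivation of Proposition~\ref{prop:ex}. So I would work with a $\Gz$-derivation free of $\lnot\mathrm{EX}$ and cut, obtained by passing to $\Gt$, using Proposition~\ref{prop:G3inG0}, and returning. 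A left rule with principal formula $D$ becomes the matching general elimination with major premiss the assumption $D$. The translated premiss derivations become its minor derivations, and they discharge the active components; these components may be missing from the reduct, giving vacuous discharge, or occur several times, giving multiple discharge. Independent contexts again yield the union of reducts. Every elimination then has an assumption as its major premiss, so the result is fully normal.
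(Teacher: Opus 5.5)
Your argument is correct. Its skeleton is the paper's: induction on height, left rules becoming general eliminations whose major premiss is the assumption of the principal formula, right rules becoming introductions, and $\wkL$, $\ctrL$ absorbed by multiset reducts. You depart from the paper in the two cases $\wkR$ and $\lnot\mathrm{EX}$, and in both the departure is needed.

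The paper handles these cases as follows:
\begin{itemize}
\item $\wkR$ is translated by grafting the derived $\bot\mathrm E$ of Proposition~\ref{prop:falsum} below the translation of $\Gamma\seq$.
\item $\lnot\mathrm{EX}$ is translated by applying $\mathrm{Exp}$ to the translations of its two premisses.
\end{itemize}
The paper asserts that full normality survives both steps. Under Definition~\ref{def:fnf} it does not, in general. The $\land\mathrm E$ inside $\bot\mathrm E$ then has the derived formula $\lnot p\land p$ as major premiss. And $\mathrm{Exp}$ acquires derived premisses: from $\lnot(q\lor r)\seq\lnot(q\lor r)$ and $q\seq q\lor r$, the rule $\lnot\mathrm{EX}$ becomes an $\mathrm{Exp}$ whose premiss $q\lor r$ comes from $\lor\mathrm I_1$.

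Your two repairs close both holes.
\begin{itemize}
\item Before the induction, you replace the given derivation by one free of $\lnot\mathrm{EX}$. You pass into $\Gt$ by Theorem~\ref{thm:equiv}, where no such rule exists, and come back by Proposition~\ref{prop:G3inG0}, which introduces none. Every $\mathrm{Exp}$ then stems from $(\mathrm{cl}^{s})$ and has assumptions as premisses.
\item You handle $\wkR$ by retargeting rather than grafting. This works because $\mathrm{Exp}$ and the general eliminations admit arbitrary conclusions.
\end{itemize}
You are also right that $\Gz$ has no primitive cut, so the paper's appeal to Theorem~\ref{thm:cutG0} here does no work. What actually needs removing is $\lnot\mathrm{EX}$.

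Your route costs a detour through $\Gt$, and with it a dependence on Theorem~\ref{thm:cutG3} via Proposition~\ref{prop:ex}, which a direct one-pass translation would avoid. In return it yields a derivation that is genuinely in full normal form.

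Two small points need tightening.
\begin{itemize}
\item State the retargeting lemma only for translations of sequents with empty succedent. A translation of $\Gamma\seq\lnot p\land p$ may well end in $\land\mathrm I$. More simply, translate every derivation of an empty-succedent sequent into a derivation of an arbitrary target formula $E$, taking $E=\bot$ for the statement and $E=C$ at $\wkR$.
\item At $\ctrL$, when both occurrences of $A$ are open, they must be identified as a single assumption, not left as two open assumptions. Otherwise the open assumptions need not form a reduct of $A,\Gamma$.
\end{itemize}
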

\begin{proof}
By Theorem~\ref{thm:cutG0} we may assume the given $\Gz$-derivation to be cut-free. We argue by induction on its height, with cases on the last rule.

\emph{Initial sequents.} The sequent $A\seq A$ translates to the assumption $A$, a fully normal derivation with $\oa=\{A\}$ and $\ero=A$. The closure $A,\lnot A\seq$ translates to the application of $\mathrm{Exp}$ to the two assumptions $A$ and $\lnot A$, with conclusion $\bot$; both major premisses are assumptions, so the derivation is fully normal, with $\oa=\{A,\lnot A\}$ and $\ero=\bot$.

\emph{Structural rules.} Left weakening $\wkL$ adds an open assumption that is not discharged. Left contraction $\ctrL$ identifies two open assumptions of the same formula, passing to a multiset reduct of the antecedent. Succedent weakening $\wkR$, from $\Gamma\seq$ to $\Gamma\seq C$, is translated by applying the derived rule $\bot\mathrm E$ of Proposition~\ref{prop:falsum} to the translation of $\Gamma\seq$, whose end-formula is $\bot$; the result is fully normal.

\emph{One-premiss left rules.} If the last rule is $\land\mathrm L$, from $A,B,\Gamma\seq C$ to $A\land B,\Gamma\seq C$, the induction hypothesis gives a fully normal derivation of $C$ from $A,B$ and (a reduct of) $\Gamma$; applying $\land\mathrm E$ with major premiss the assumption $A\land B$ and this derivation as minor derivation, discharging $[A,B]$, yields a fully normal derivation of $C$ from $A\land B$ and (a reduct of) $\Gamma$, in which the new major premiss is again an assumption. The rules $\lnot\lnot\mathrm L$, $\lnot\lor\mathrm L$, $\lnot\arr\mathrm L$ translate to $\lnot\lnot\mathrm E$, $\lnot\lor\mathrm E$, $\lnot\arr\mathrm E$ in the same way.

\emph{Two-premiss left rules.} If the last rule is $\lor\mathrm L^{s}$, from $A,\Gamma\seq C$ and $B,\Delta\seq C$ to $A\lor B,\Gamma,\Delta\seq C$, the induction hypothesis gives fully normal derivations of $C$ from $A,\Gamma$ and from $B,\Delta$; applying $\lor\mathrm E$ with major premiss the assumption $A\lor B$ and these as the two minor derivations, discharging $[A]$ and $[B]$, yields a fully normal derivation of $C$ from $A\lor B,\Gamma,\Delta$. The rules $\lnot\land\mathrm L^{s}$ and $\arr\mathrm L^{s}$ translate to $\lnot\land\mathrm E$ and $\arr\mathrm E$ likewise.

\emph{Right rules and explosion.} Each right rule translates to the corresponding introduction: $\land\mathrm R^{s}$ to $\land\mathrm I$, $\lor\mathrm R_i$ to $\lor\mathrm I_i$, $\arr\mathrm R_i$ to $\arr\mathrm I_i$, $\lnot\lnot\mathrm R$ to $\lnot\lnot\mathrm I$, $\lnot\land\mathrm R_i$ to $\lnot\land\mathrm I_i$, $\lnot\lor\mathrm R^{s}$ to $\lnot\lor\mathrm I$, $\lnot\arr\mathrm R^{s}$ to $\lnot\arr\mathrm I$; the explosion rule $\lnot\mathrm{EX}$, from $\Gamma\seq\lnot A$ and $\Delta\seq A$ to $\Gamma,\Delta\seq C$, translates to $\mathrm{Exp}$ applied to the translations of the two premisses. In every case the introductions and $\mathrm{Exp}$ introduce no elimination whose major premiss is not an assumption, so full normality is preserved.

Since each rule of $\Gz$ is thus simulated by a step that preserves full normality, and the initial sequents translate to fully normal derivations, the derivation constructed is fully normal, with the stated open assumptions and end-formula.
\end{proof}

\subsection{Full normalization and equivalence}\label{subsec:norm}

The two translations combine into the full normalization theorem, in the manner of \cite[Thm.~3.18]{KamideNegri2025}. The essential point is that the passage through the sequent calculus launders an arbitrary derivation: a non-normal derivation is simulated in $\Gz$ with cuts, the cuts are eliminated by Theorem~\ref{thm:cutG0}, and the resulting cut-free derivation is translated back to a fully normal derivation.

\begin{theorem}[Full normalization for $\Ng$]\label{thm:norm}
For every derivation $\mathcal D$ in $\Ng$ with $\oa(\mathcal D)=\Gamma$ and $\ero(\mathcal D)=C$, there is a fully normal derivation $\mathcal D'$ in $\Ng$ with $\oa(\mathcal D')=\Gamma^{*}$, a multiset reduct of $\Gamma$, and $\ero(\mathcal D')=C$.
\end{theorem}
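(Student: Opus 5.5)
The plan is to compose the two translations through the cut-elimination theorem, as the paragraph preceding the statement announces. Let $\mathcal D$ be a derivation in $\Ng$ with $\oa(\mathcal D)=\Gamma$ and $\ero(\mathcal D)=C$. The first step applies Theorem~\ref{thm:ndtosc} to $\mathcal D$, which gives $\Gz\vdash\Gamma\seq C$. Here every elimination whose major premiss is not an assumption is simulated by a left rule followed by a cut on the major premiss. If $\mathcal D$ contains such eliminations, the simulating derivation contains cuts. Theorem~\ref{thm:ndtosc} already invokes Theorem~\ref{thm:cutG0}, so it delivers plain derivability in $\Gz$; I will use only that, since cut admissibility is all the next step needs.

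The second step applies Theorem~\ref{thm:sctond} to $\Gz\vdash\Gamma\seq C$. That theorem first replaces the $\Gz$-derivation by a cut-free one, again by Theorem~\ref{thm:cutG0}, and then translates it rule by rule into a fully normal derivation $\mathcal D'$ with $\oa(\mathcal D')=\Gamma^{*}$, a multiset reduct of $\Gamma$, and $\ero(\mathcal D')=C$. This is exactly the conclusion of the theorem. The reduct arises because $\ctrL$ is translated as the identification of open assumptions, and weakened formulae may or may not appear as genuine open assumptions.

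The main point to verify is that the succedent $C$ is a formula in both directions, so that the end-formula matches. A derivation in $\Ng$ always ends in a formula, so $C$ is never empty and the clause of Theorem~\ref{thm:sctond} about empty succedents with $\ero=\bot$ does not interfere. The one subtle case is when $C$ is itself $\bot=\lnot p\land p$. Then $\Gamma\seq\bot$ has a formula in the succedent, and the translation of Theorem~\ref{thm:sctond} returns $\bot$ as end-formula in that reading too, so there is no clash.

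A second check concerns the multiset bookkeeping. The open assumptions of $\mathcal D'$ should form a reduct of $\Gamma$ and not contain new formulae. This holds because every open assumption introduced by the translation comes from an antecedent formula of the translated sequent, either through initial sequents, $\wkL$, or the principal formulae of left rules, and these all occur in $\Gamma$. I expect no real obstacle beyond this, since the substantive work lies in Theorem~\ref{thm:cutG0} and the two simulation theorems, which are taken as given.
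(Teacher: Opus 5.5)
Your proposal is correct and is the paper's own argument: Theorem~\ref{thm:ndtosc} gives $\Gz\vdash\Gamma\seq C$, and Theorem~\ref{thm:cutG0} lets that derivation be taken cut-free (you let Theorem~\ref{thm:sctond} invoke it internally, which amounts to the same). Theorem~\ref{thm:sctond} then returns a fully normal $\mathcal D'$ with $\oa(\mathcal D')$ a multiset reduct of $\Gamma$ and $\ero(\mathcal D')=C$, and your extra checks on the end-formula and the open assumptions are sound but already contained in the statement of Theorem~\ref{thm:sctond}.
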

\begin{proof}
By Theorem~\ref{thm:ndtosc}, $\Gz\vdash\Gamma\seq C$. By Theorem~\ref{thm:cutG0}, this derivation may be taken cut-free. By Theorem~\ref{thm:sctond}, there is a fully normal derivation $\mathcal D'$ in $\Ng$ with $\oa(\mathcal D')$ a multiset reduct of $\Gamma$ and $\ero(\mathcal D')=C$, which was to be shown.
\end{proof}

\begin{theorem}[Equivalence of $\Ng$ and the sequent calculi]\label{thm:ndequiv}
For every finite multiset $\Gamma$ and every formula $A$, the following are equivalent: \textup{(i)} there is a derivation of $A$ in $\Ng$ whose open assumptions form a multiset reduct of $\Gamma$; \textup{(ii)} $\Gz\vdash\Gamma\seq A$; \textup{(iii)} $\Gt\vdash\Gamma\seq A$. When these hold, the derivation in \textup{(i)} may be taken fully normal.
\end{theorem}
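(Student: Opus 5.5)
The plan is to close the cycle (i)$\Rightarrow$(ii)$\Rightarrow$(iii)$\Rightarrow$(ii)$\Rightarrow$(i), relying on the results already established. The equivalence of (ii) and (iii) is exactly Theorem~\ref{thm:equiv}, applied to sequents whose succedent is the formula $A$, so nothing new is needed there.

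For (i)$\Rightarrow$(ii), I take a derivation $\mathcal D$ of $A$ whose open assumptions form a multiset reduct $\Gamma^{*}$ of $\Gamma$. Theorem~\ref{thm:ndtosc} gives $\Gz\vdash\Gamma^{*}\seq A$. The missing copies, that is the difference between $\Gamma$ and $\Gamma^{*}$, are then restored by repeated applications of the primitive rule $\wkL$ of $\Gz$, yielding $\Gz\vdash\Gamma\seq A$. The only point requiring care is bookkeeping: Theorem~\ref{thm:ndtosc} is stated with $\oa(\mathcal D)$ exactly equal to the antecedent, so I apply it with antecedent $\Gamma^{*}$ and weaken afterwards. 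Cut is used in this simulation when $\mathcal D$ is not normal, but it is admissible by Theorem~\ref{thm:cutG0}.

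For (ii)$\Rightarrow$(i), together with the final clause, I apply Theorem~\ref{thm:sctond} to $\Gz\vdash\Gamma\seq A$. Since the succedent is the formula $A$ and not empty, the theorem produces a fully normal derivation with end-formula $A$ and open assumptions a multiset reduct of $\Gamma$. This is (i), and it is already in full normal form. Alternatively, given any derivation as in (i), Theorem~\ref{thm:norm} normalizes it, with the reduct of a reduct again a reduct of $\Gamma$.

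I expect no real obstacle; this is a corollary that assembles earlier theorems. The one subtle step is the mismatch between "open assumptions equal to $\Gamma$" in Theorem~\ref{thm:ndtosc} and "a multiset reduct of $\Gamma$" in (i). It is resolved by weakening in $\Gz$ in one direction, and by noting in the other that multiset reducts compose, since lowering multiplicities twice is lowering them once.
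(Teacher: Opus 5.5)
Your proposal is correct and is essentially the paper's own proof: (ii)$\Leftrightarrow$(iii) by Theorem~\ref{thm:equiv}, (i)$\Rightarrow$(ii) by Theorem~\ref{thm:ndtosc} applied with antecedent $\Gamma^{*}$ followed by $\wkL$ to restore the lowered multiplicities, and (ii)$\Rightarrow$(i) together with full normality by Theorem~\ref{thm:sctond}. Your alternative route through Theorem~\ref{thm:norm}, using that a reduct of a reduct is again a reduct of $\Gamma$, is also sound, but it adds nothing beyond what Theorem~\ref{thm:sctond} already gives.
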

\begin{proof}
The equivalence of (ii) and (iii) is Theorem~\ref{thm:equiv}. Suppose (i), with open assumptions $\Gamma^{*}$ a multiset reduct of $\Gamma$. By Theorem~\ref{thm:ndtosc}, $\Gz\vdash\Gamma^{*}\seq A$, and repeated applications of $\wkL$ restore the lowered multiplicities, giving $\Gz\vdash\Gamma\seq A$, which is (ii). Suppose (ii). By Theorem~\ref{thm:sctond} there is a fully normal derivation of $A$ in $\Ng$ whose open assumptions form a multiset reduct of $\Gamma$, which is (i) in the stronger form announced.
\end{proof}

\begin{corollary}\label{cor:notheorems}
No derivation in $\Ng$ has all of its assumptions discharged; that is, $\Ng$ has no theorems.
\end{corollary}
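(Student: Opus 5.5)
We reduce the corollary to the sequent calculus and use the fact, already established, that no sequent with empty antecedent is derivable there. Suppose, for contradiction, that $\mathcal D$ is a derivation in $\Ng$ all of whose assumptions are discharged, so that $\oa(\mathcal D)$ is the empty multiset, and let $C=\ero(\mathcal D)$.

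First, apply Theorem~\ref{thm:ndtosc} to $\mathcal D$ with $\Gamma$ empty. This gives $\Gz\vdash\ \seq C$. The simulation is by induction on the height of $\mathcal D$ and uses cut only on non-normal eliminations; those cuts are removed by Theorem~\ref{thm:cutG0}. Nothing in it requires the antecedent to be nonempty, so the empty case needs no special treatment. Second, apply Corollary~\ref{cor:noemptyG0}, which says that no sequent with empty antecedent is derivable in $\Gz$. This is the required contradiction. Equivalently, we could pass through Theorem~\ref{thm:ndequiv}: with $\Gamma$ empty, its only multiset reduct is empty, so (i) gives (iii), $\Gt\vdash\ \seq C$, which contradicts Proposition~\ref{prop:noempty} directly.

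The one step that needs care is the choice of end-formula. Every derivation in $\Ng$ ends in a formula, so $C$ is never the empty succedent. The sequent $\seq C$ is therefore of the form excluded by Proposition~\ref{prop:noempty}, namely a formula succedent with empty antecedent. If one prefers to read ``theorem'' as the derivability of $\bot$ with no open assumptions, the same argument applies with $C=\bot=\lnot p\land p$, since $\bot$ is itself a formula.

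As a sanity check, a direct argument is also available. By Theorem~\ref{thm:norm} we may take $\mathcal D$ fully normal. We then show that every fully normal derivation has at least one open assumption, by induction on its structure. The elimination and $\mathrm{Exp}$ cases are immediate, because their major premisses are assumptions, and at the root level these can only be open ones. The introduction cases follow from the induction hypothesis, since introductions discharge nothing. The subtle point in this route is that an assumption occurring as a major premiss might be discharged by a lower elimination. The translation route avoids this issue entirely, and that is why we prefer it.
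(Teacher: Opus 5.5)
Your argument is correct and is exactly the paper's proof: Theorem~\ref{thm:ndtosc} applied to a derivation with no open assumptions gives $\Gz\vdash{}\seq C$, which Corollary~\ref{cor:noemptyG0} excludes (your detour through Theorem~\ref{thm:ndequiv} and Proposition~\ref{prop:noempty} is an equally valid variant). The worry in your sanity check is unfounded, and Theorem~\ref{thm:norm} is not even needed there: a plain induction on arbitrary derivations of $\Ng$ works, because no rule discharges assumptions outside its minor derivations, so the open assumption that the induction hypothesis supplies for the major-premiss subderivation (or for any premiss of an introduction or of $\mathrm{Exp}$) stays open.
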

\begin{proof}
Such a derivation would give, by Theorem~\ref{thm:ndtosc}, a derivation of $\seq A$ in $\Gz$, which Corollary~\ref{cor:noemptyG0} excludes. The system $\Ng$ is a calculus of consequence, and Theorem~\ref{thm:ndequiv} states the sense in which it agrees with the sequent calculi; for the classical theorems one passes to $\Gt^{c}$ and to the natural deduction system with the classical reductio, as in Remark~\ref{rem:raa}.
\end{proof}

The normalization theorem for $\Ng$ and the cut-elimination theorem for $\Gz$ are, in this way, two aspects of a single structural fact, made to correspond by the bidirectional translation: a fully normal derivation of natural deduction and a cut-free derivation of the sequent calculus are the images of one another, and the discipline that every major premiss of an elimination be an assumption is the natural deduction counterpart of the analyticity of the cut-free sequent calculus, itself the counterpart of the cut-freeness of the block calculus with which we began.

\section{Concluding remarks}\label{sec:remarks}

The development of the previous sections places the calculus of sequent-style tableaux within the same proof-theoretic architecture that Kamide and Negri erected for the extended Belnap--Dunn and intuitionistic logics. Starting from the block calculus, whose only ambition was to combine the readability of the sequent notation with the refutational efficiency of tableaux, we have obtained a structural-rule-free G3-style calculus $\Gt$ whose empty-succedent fragment is the block calculus itself, a G0-style calculus $\Gz$ with independent contexts and explicit structural rules, a theorem establishing the equivalence of the two, the cut-elimination theorem for $\Gz$ as a consequence of that equivalence, a natural deduction system $\Ng$ with general elimination rules, and a full normalization theorem for $\Ng$ obtained by bidirectional translation. The classical strength of the calculus is carried by the refutational reading, and by it alone: the closable blocks --- equivalently, the sequents derivable with empty succedent --- are exactly the classically unsatisfiable finite sets of formulae, and it is on this fragment that the involutive De Morgan negation delivers classical logic in full. The formula-succedent sequents and the natural deduction derivations that correspond to them furnish, above this base, the constructive scaffolding on which the normalization theorem rests; they do not of themselves furnish theorems, as Proposition~\ref{prop:noempty} records, and in this the present systems differ from the intuitionistic-based systems of Kamide and Negri, whose succedent side is the logic. Adjoining the rule of indirect proof restores classical logic on the succedent side as well, and restores it exactly (Proposition~\ref{prop:raa}). Figure~\ref{fig:summary} records the resulting network of systems and the theorems that connect them.

\begin{figure}[htb]
\centering
\begin{tikzpicture}[
  >={Latex[length=2.4mm]},
  box/.style={draw, rounded corners=2pt, align=center, inner sep=6pt,
              minimum height=13mm, text width=52mm, line width=0.5pt},
  smallbox/.style={draw, rounded corners=2pt, align=center, inner sep=5pt,
              minimum height=10mm, text width=38mm, line width=0.45pt},
  elab/.style={font=\footnotesize, align=left, inner sep=3pt, text width=44mm},
  prop/.style={font=\footnotesize\itshape, align=center, text width=24mm},
  eq/.style={<->, line width=0.7pt, shorten >=2pt, shorten <=2pt},
  ar/.style={->, line width=0.7pt, shorten >=2pt, shorten <=2pt},
  tie/.style={line width=0.3pt, densely dotted, shorten >=2pt, shorten <=2pt},
  vv/.style={<->, line width=0.45pt, densely dashed, shorten >=2pt, shorten <=2pt},
]
\node[smallbox] (lk) at (0,9.9) {\small\textbf{cut-free $\LK$}\\[1pt]\footnotesize Gentzen};
\node[box]      (bt) at (0,8.0) {\small\textbf{block calculus}\\[1pt]\footnotesize sequent-style tableaux};
\node[box]      (g3) at (0,5.6) {\small\textbf{$\Gt$} \ \footnotesize (G3-style)\\[2pt]\footnotesize shared contexts,\\ no structural rules};
\node[box]      (g0) at (0,2.8) {\small\textbf{$\Gz$} \ \footnotesize (G0-style)\\[2pt]\footnotesize split contexts,\\ explicit wk, ctr};
\node[box]      (ng) at (0,0.0) {\small\textbf{$\Ng$} \ \footnotesize (natural deduction)\\[2pt]\footnotesize general elimination rules};
\draw[eq] (lk) -- node[font=\footnotesize, right=2pt]{$\cong$ \ \cite{Cuconato2025IM}} (bt);
\draw[ar] (bt) -- node[elab, right]{Prop.~\ref{prop:blockfragment}:\\ the $\Pi\seq$ fragment} (g3);
\draw[eq] (g3) -- node[elab, right]{Thm.~\ref{thm:equiv} (equivalence);\\ Thm.~\ref{thm:cutG0}: cut elimination} (g0);
\draw[eq] (g0) -- node[elab, right]{Thms.~\ref{thm:ndtosc},~\ref{thm:sctond} (translations);\\ Thm.~\ref{thm:norm}: full normalization} (ng);
\node[prop] (p1) at (-4.5,5.6) {analyticity};
\node[prop] (p2) at (-4.5,2.8) {cut-freeness};
\node[prop] (p3) at (-4.5,0.0) {full normal form};
\draw[tie] (p1) -- (g3);
\draw[tie] (p2) -- (g0);
\draw[tie] (p3) -- (ng);
\draw[vv] (p1) -- (p2);
\draw[vv] (p2) -- (p3);
\end{tikzpicture}
\caption{\rm Proof-theoretic network induced by the sequent-style tableaux. The block calculus is the empty-succedent fragment of the cut-free sequent calculus $\Gt$; $\Gt$ and $\Gz$ are theorem-equivalent, and $\Gz$ is equivalent to the natural deduction system $\Ng$. The labels on the left indicate the proof-theoretic property realized at each level.}
\label{fig:summary}
\end{figure}

Two directions of extension deserve mention. The first is to the propositional neighbours of the calculus. The choice of an involutive De Morgan negation and of a material implication fixes the classical logic; weakening the negation rules --- dropping the involution $\lnot\lnot\mathrm L$, $\lnot\lnot\mathrm R$, or the closure on complementary pairs --- yields the subsystems of the Belnap--Dunn family, for which the same three-fold correspondence can be set up along the lines traced here, and which we do not pursue. The second, and more substantial, is the extension to first-order logic, which we state as a remark since it is the natural continuation of the present work.

\begin{remark}[The first-order extension]\label{rem:firstorder}
The first-order block calculus of \cite{Cuconato2025IM} adds to the propositional rules the universal rules of type $\gamma$, which instantiate a universal formula on a parameter of the branch while retaining the principal formula, and the existential rules of type $\delta$, which instantiate on a parameter fresh for the branch and consume the principal formula. In the sequent presentation these become the quantifier rules of $\Gt$ and $\Gz$: a rule for $\forall$ on the left and for $\lnot\exists$ on the left with an arbitrary instantiating term, corresponding to the $\gamma$ rules, and a rule for $\exists$ on the left and $\lnot\forall$ on the left with a proper parameter --- the eigenvariable --- corresponding to the $\delta$ rules. The freshness condition on the fresh constant $a_{\uparrow}$ of the block calculus is, on the sequent side, exactly the eigenvariable condition, and on the natural deduction side exactly the restriction on the parameter of the general elimination rule for the existential quantifier; the structural analogy between the necessitation restriction of modal logic and the eigenvariable condition, which Negri makes in the modal setting, has here its quantificational instance. The general elimination rules for the quantifiers take the form
\[
\frac{\forall x\,A\quad \begin{array}{c}[A(t)]\\ \vdots\\ C\end{array}}{C}
\qquad
\frac{\exists x\,A\quad \begin{array}{c}[A(a)]\\ \vdots\\ C\end{array}}{C}
\]
with $t$ an arbitrary term in the first and $a$ a parameter, proper to the rule, in the second, together with the De Morgan forms for $\lnot\forall$ and $\lnot\exists$. The equivalence theorem, the cut-elimination theorem and the full normalization theorem extend to the first-order calculi by adding to the inductions of Section~\ref{sec:sequents} and Section~\ref{sec:nd} the cases of the quantifier rules, in which the only new ingredient is the systematic renaming of proper parameters that the substitution and the eigenvariable condition require --- exactly the renaming that the first-order correspondence with $\LK$ already demands in \cite{Cuconato2025IM}. We refrain from carrying out these inductions here, since they add length without altering the structure of the argument, and since the propositional case already exhibits the whole of the proof-theoretic mechanism.
\end{remark}

It is worth closing on the methodological point that motivates the whole construction. The block calculus was designed as a calculus of \emph{analysis}: one begins with the negation of what is to be proved and decomposes, and the closure of every branch is the proof. The sequent calculus is a calculus of \emph{synthesis}: one composes the rules upward to the endsequent. The natural deduction with general elimination rules occupies a middle ground, in which the eliminations decompose while the introductions compose, and full normal form is precisely the regime in which the two do not interfere. That these three readings of one and the same combinatorial structure can be translated into one another, and that the translation carries analyticity to cut-freeness to full normality, is the sense in which the calculus of sequent-style tableaux, modest in its origin, is a full member of the structural theory of proof.


\end{document}